\newtheorem{thm}{Theorem}[section]
\newtheorem*{at}{Arnold's Theorem}
\newtheorem{lem}[thm]{Lemma}
\newtheorem{prop}[thm]{Proposition}
\theoremstyle{definition}
\newtheorem{defn}[thm]{Definition}
\theoremstyle{remark}
\newtheorem{ex}[thm]{Example}
\numberwithin{equation}{section}
\begin{document}

\title{Finite type $\xi$-asymptotic lines of plane fields in $\mathbb{R}^3$}%
 
\author{Douglas H.  da Cruz}
 
\author{Ronaldo A. Garcia}
 
\begin{abstract}
We prove that a finite type curve is an $\xi$-asymptotic line (without parabolic points) of a suitable plane field. It is also given
 an explicit  example of a hyperbolic closed finite type $\xi$-asymptotic line. These results obtained here are   generalizations, for plane fields, of the results of V. Arnold \cite[Theorems 2, 3 and 5]{arnold_asy}.
\vskip .3cm
\noindent  {\bf Keywords: } closed asymptotic lines, plane field, parabolic points.
 
 \noindent {\bf  MSC(2000): } {53C12,  37C27, 34C25. } 
\end{abstract}
\maketitle

\section{Introduction}

A regular plane field in $\mathbb{R}^3$ is usually defined by the kernel of   a differential form or an unitary vector field $\xi\colon \mathbb{R}^3\to  \mathbb{R}^3$. In this last case $\xi(p)$ is the normal vector  to the plane at  point $p$.
The classical and germinal work about plane fields in $\mathbb{R}^3$ is \cite{MR1510041}.

The normal curvature of a plane field is defined by (see \cite{MR1749926} and \cite{Euler1760})
\begin{equation}\label{eq:kn}
k_n(p,dr)=-\frac{\langle d\xi(p) ,dr\rangle}{\langle dr, dr \rangle} .
\end{equation}
For  integrable plane fields the  normal curvature is the usual concept of  curves  on  surfaces.

The regular curves $\gamma\colon I\to \mathbb{R}^3$  such that $k_n(\gamma(t), \gamma^\prime(t))=0$ are called {\it $\xi$-asymptotic lines} and the directions $dr$ such $k_n(p,dr)=0$ are called {\it $\xi$-asymptotic directions}.

Recall that asymptotic lines on surfaces are  regular curves $\gamma$ such that $k_n(\gamma(t), \gamma^\prime(t))=0$. Also, asymptotic lines are   the curves $\gamma$ such that the osculating plane of $\gamma$ coincides with the tangent plane of the surface along it, so asymptotic lines are of  extrinsic nature.

The local study, and  singular aspects of asymptotic lines  on   surfaces in $\mathbb{R}^3$, near parabolic points,  is a very classical subject,  see \cite{ MR1725206, MR1634428,  MR2532372}, \cite{faridbook} and references therein.

The  study of closed asymptotic lines of surfaces in $\mathbb{R}^3$ under the viewpoint of qualitative theory of differential equations is more recent,  see \cite{MR1725206, MR1634428, MR2532372}. It is worth to mention that existence of closed asymptotic lines on  the tubes of   \lq\lq T-surfaces\rq\rq \; is still an open problem. See \cite[page 107]{geo2} and \cite{nirenberg}.

Also, it  is not known if there is a surface in $\mathbb{R}^3$ having a cylinder region foliated by closed asymptotic lines (see \cite[page 110]{rozen}). In $\mathbb{S}^3$ all asymptotic lines of the Clifford torus are globally defined and they are  the Villarceau circles.

V. Arnold  in \cite{ arnold_asy} studied the topology  of asymptotic lines  being  curves of type  $(t,t^m,t^n) $   near $t=0$, which are called of finite type.  Also it was shown in  \cite{ arnold_asy} that the projection of a closed asymptotic line of an hyperbolic surface of graph type $(x,y,h(x,y))$ in the horizontal plane $(x,y)$ cannot be  a starlike curve.

The main results of this work are the following.

The Theorem \ref{t5} states that  any
finite type curve is an $\xi$-asymptotic line  
 of a suitable plane field in $\mathbb{R}^3$.

The Theorem \ref{t1} gives an example of a hyperbolic closed finite type $\xi$-asymptotic line of a plane field in $\mathbb{R}^3$.

\section{Preliminaries and Previous Results}

\label{pre}
In this paper, the space $\mathbb{R}^3$ is endowed with the Euclidean norm $|\cdot|=\langle \cdot,\cdot\rangle^{\frac{1}{2}}$.

\begin{defn}[{\cite[Definition 5.15]{zbMATH06351542}}]
A subset $\Omega\subset\mathbb{R}^2$ is called a  \emph{starlike convex set} if there is a point $p\in\Omega$, called the \emph{star point},
such that, for every $q\in\Omega$, the segment $\overline{pq}$ lies in $\Omega$. The boundary of a  starlike convex set
is called a  {\it starlike curve}.
\end{defn}

\begin{thm}[D. Panov, see {\cite{ arnold_asy}}]\label{panov}
The projection of a closed asymptotic line of a surface $z=\varphi(x,y)$ to the plane $\{z=0\}$ cannot
be a starlike curve {\rm(}in particular, this projection cannot be a convex curve{\rm)}.
\end{thm}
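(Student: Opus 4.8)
The plan is to prove Panov's theorem by contradiction, exploiting the relationship between asymptotic directions and the sign of the Gaussian curvature (equivalently, the sign of the second fundamental form). Let me think about this carefully.

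We have a surface $z = \varphi(x,y)$. An asymptotic line is a curve where the normal curvature vanishes. For a graph, the second fundamental form in the $(x,y)$ coordinates is given by the Hessian of $\varphi$ (up to a positive factor $1/\sqrt{1+|\nabla\varphi|^2}$). So asymptotic directions exist precisely where the Hessian is indefinite or degenerate — i.e., where the Gaussian curvature is $\leq 0$. At points where there's a genuine asymptotic direction (away from parabolic points, where two asymptotic directions coincide), the Gaussian curvature is negative, meaning the Hessian $\text{Hess}(\varphi)$ is indefinite.

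The key idea: suppose the projection $\gamma$ of a closed asymptotic line to the plane $\{z=0\}$ is a starlike curve, bounding a starlike convex region $\Omega$ with star point $p$. Along the asymptotic line, the tangent direction $\gamma'(t)$ (in the $(x,y)$ plane) is an asymptotic direction, so it satisfies the quadratic form equation $\text{Hess}(\varphi)(\gamma',\gamma') = 0$. This is an ordinary differential condition. The starlike hypothesis gives a geometric constraint: the radial vector $\gamma(t) - p$ makes a well-defined angular variation of total index $\pm 1$ as $t$ traverses the closed curve.

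The approach I would take: translate coordinates so the star point $p$ is at the origin, and consider the function $H(t) = \text{Hess}(\varphi)(\gamma(t))(\gamma(t), \gamma(t))$, or perhaps the discriminant-type quantity measuring how the radial direction $\gamma(t)$ sits relative to the asymptotic cone of the Hessian. The crucial fact is that along the asymptotic line the tangent $\gamma'$ is an asymptotic (null) direction of the indefinite form $\text{Hess}(\varphi)$, so the null cone of this form separates the plane into two sectors, and the radial direction must lie in one of them in a way that the starlike condition forbids. Concretely, starlikeness means $\langle \gamma(t), \nu(t)\rangle \neq 0$ where $\nu$ is the inward normal to $\gamma$, i.e., the radial vector is never tangent to $\gamma$; combined with the null-direction condition on $\gamma'$, one derives that a certain continuous function cannot return to its initial value after one period, contradicting periodicity.

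\medskip

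The hard part will be making precise the topological/index obstruction. The crux is to identify the correct monodromy invariant: as $t$ runs over the period, the asymptotic direction field $\gamma'(t)$ determines (via the indefinite quadratic form $\text{Hess}\,\varphi$) a pairing with the second null direction, and the rotation number of $\gamma'$ around the curve must be consistent both with the closedness of the asymptotic line and with the starlike geometry. I expect the main obstacle is showing rigorously that starlikeness forces the angular velocity $\arg(\gamma(t))'$ to have constant sign (total winding $+1$), while the asymptotic condition forces the tangent direction to rotate in a manner incompatible with this — essentially a Sturm-type or winding-number comparison. Once the sign constraint is extracted, the contradiction follows by integrating the relevant one-form over the closed curve and showing the integral is simultaneously forced to be zero (by periodicity) and strictly positive (by the geometric hypotheses).
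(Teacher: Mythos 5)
The paper does not actually prove this statement: it is quoted from Arnold's article, where the argument is due to D.~Panov, and no proof appears anywhere in the present text. So your proposal has to stand on its own, and as written it does not. It is a strategy outline whose decisive step is explicitly deferred. You correctly assemble the two ingredients --- along the projected curve $p(t)=(x(t),y(t))$ the velocity $\dot p$ is a null direction of the Hessian $Q=\mathrm{Hess}\,\varphi$ (this is exactly the asymptotic condition for a graph), and starlikeness with respect to a point (taken as the origin) gives $p\wedge\dot p\neq0$ --- but the contradiction itself is left as ``the hard part'': you never exhibit the continuous function that ``cannot return to its initial value,'' nor the one-form whose integral is ``simultaneously zero and strictly positive.'' That object is the entire content of Panov's proof; without it there is no proof. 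Moreover, the one concrete mechanism you do assert --- that the null cone ``separates the plane into two sectors, and the radial direction must lie in one of them in a way that the starlike condition forbids'' --- is false as stated: starlikeness only forbids the radial vector from lying on the tangent null line; it may perfectly well lie in either sector or on the second null line. The alternative winding-number comparison is also undeveloped and not obviously workable, since a starlike curve has tangent turning number $\pm1$ and nothing in the asymptotic condition alone contradicts that.

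The missing idea is concrete and simple. Consider the height $f(t)$ at which the tangent plane to the surface at $\gamma(t)$ (equivalently, since $\gamma$ is asymptotic, its osculating plane) meets the vertical axis through the star point, namely $f=\left(\varphi-x\varphi_x-y\varphi_y\right)\circ p$. Since $\dot\varphi=\varphi_x\dot x+\varphi_y\dot y$ along the curve, differentiation gives $\dot f=-Q(p,\dot p)$. Wherever $Q$ is indefinite and nondegenerate, the linear form $w\mapsto Q(\dot p,w)$ is nonzero and vanishes exactly on the line spanned by $\dot p$ (because $Q(\dot p,\dot p)=0$); starlikeness, $p\wedge\dot p\neq0$, then forces $\dot f\neq0$, and a continuity argument gives $\dot f$ a constant sign on any arc of nonparabolic points. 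Hence $f$ is strictly monotone along such a closed curve, contradicting $f(0)=f(l)$. This function --- the intersection of the osculating plane with the axis through the star point --- is precisely the insight your proposal lacks. Note also that any complete write-up must address parabolic points of the curve, where $Q(\dot p,\cdot)\equiv0$ and $\dot f=0$ (and across which $\dot f$ could in principle change sign); your sketch is silent on this degeneracy as well.
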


\begin{defn}[{\cite{ arnold_asy}}]
A smoothly immersed curve $\gamma:I\to \mathbb R^3$ 
 is said to be of \emph{finite type} at a point $x$, if $\{\gamma^\prime(x), \gamma^{\prime\prime}(x),  \ldots,\gamma^{(k)}(x) \} $  generate
all the tangent space $T_{\gamma(x)}\mathbb R^3$ for some $k\in \mathbb N.$  Here $\gamma^{(k)}(x) $ denotes the  derivative of order $k$ of  $\gamma$.  In a neighborhood of this point, the curve is parametrized locally by $\gamma(x)=(x,a_mx^m+\mathcal{O}^{m+1}(x),b_nx^{n}+\mathcal{O}^{n+1}(x))$,
where $m,n\in\mathbb{N}$, $a_mb_n\ne 0$  and $1<m<n$.

The set 
$\{1,m,n\} $, $(1<m<n)$, of the degrees of $\gamma$ is called the  \emph{symbol} of the point. If $n=m+1$, then $\gamma$ is said to be of \emph{rotating type} at the point.

If a curve is of finite type (resp. rotating type) at every point, then it is called of \emph{finite type curve} (resp. \emph{rotating type curve}).

\end{defn}

A finite type  curve $\gamma$ can be have inflection points, i.e., points where the curvature of $\gamma$ vanishes.

\begin{at}[See {\cite{ arnold_asy}}]\label{arnoldv}
An asymptotic curve of finite type on  a hyperbolic surface is a rotating curve.

Every rotating space curve of finite type is an asymptotic line on a suitable hyperbolic surface.

\end{at}
A new proof of Arnold's Theorem will be given in Appendix \ref{pat}.

\subsection{Plane fields  in \texorpdfstring{$\mathbb{R}^3$}{R3}}

Let $\xi:\mathbb{R}^3\rightarrow\mathbb{R}^3$ be a vector field of class $C^{k}$, where $k\geq3$.

\begin{defn}
A \emph{plane field} $\xi$ in $\mathbb{R}^3$, orthogonal to the vector field $\xi$, is defined by the 1-form $\langle \xi,dr\rangle=0$, where $dr$ is a direction in $\mathbb{R}^3$. See Fig. \ref{pf}.
\end{defn}

\begin{thm}[{\cite[Jacobi Theorem, p.2]{MR1749926}}]\label{jacobi}
There exists a family of surfaces orthogonal to $\xi$ if, and only if, $\langle\xi,curl(\xi)\rangle\equiv0$.
\end{thm}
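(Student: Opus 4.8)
The plan is to recognize this as the classical Frobenius integrability criterion for the $2$-plane field $\ker\langle\xi,dr\rangle$ and to reduce it to the vanishing of a single $3$-form. Writing $\xi=(P,Q,R)$ and $dr=(dx,dy,dz)$, the plane field is the kernel of the $1$-form $\omega=\langle\xi,dr\rangle=P\,dx+Q\,dy+R\,dz$, defined wherever $\xi\neq0$. A surface is orthogonal to $\xi$ exactly when its tangent planes lie in $\ker\omega$, i.e.\ when it is an integral surface of this distribution; thus the existence of a family of such surfaces means precisely that $\ker\omega$ is integrable. First I would therefore restate the theorem as: $\ker\omega$ is integrable if and only if $\langle\xi,\operatorname{curl}\xi\rangle\equiv0$, and invoke the Frobenius theorem, whose integrability obstruction for a $1$-form is $\omega\wedge d\omega$.

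The key computation is the identity linking this obstruction to the triple product $\langle\xi,\operatorname{curl}\xi\rangle$. Differentiating,
\[
d\omega=(R_y-Q_z)\,dy\wedge dz+(P_z-R_x)\,dz\wedge dx+(Q_x-P_y)\,dx\wedge dy,
\]
so the coefficients of $d\omega$ are exactly the components of $\operatorname{curl}\xi=(R_y-Q_z,\,P_z-R_x,\,Q_x-P_y)$. Wedging with $\omega$ and using that every surviving product equals $dx\wedge dy\wedge dz$ yields
\[
\omega\wedge d\omega=\big(P(R_y-Q_z)+Q(P_z-R_x)+R(Q_x-P_y)\big)\,dx\wedge dy\wedge dz=\langle\xi,\operatorname{curl}\xi\rangle\,dx\wedge dy\wedge dz.
\]
Hence $\omega\wedge d\omega=0$ if and only if $\langle\xi,\operatorname{curl}\xi\rangle\equiv0$, which is the bridge between the analytic condition and the Frobenius obstruction.

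With this identity the two implications split cleanly. For the easy direction, if a family $f=\mathrm{const}$ of orthogonal surfaces exists then $\nabla f$ is parallel to $\xi$, so $\xi=\mu\,\nabla f$ for a nonvanishing scalar $\mu$ (an integrating factor); then $\operatorname{curl}\xi=\nabla\mu\times\nabla f$ because $\operatorname{curl}\nabla f=0$, and $\langle\xi,\operatorname{curl}\xi\rangle=\mu\,\langle\nabla f,\nabla\mu\times\nabla f\rangle=0$ as a triple product with a repeated factor. The substantive direction is the converse, and this is where I expect the main obstacle: producing the integral surfaces (equivalently, the integrating factor $\mu$) from the vanishing of $\langle\xi,\operatorname{curl}\xi\rangle$ alone. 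I would handle it by passing to involutivity: choosing two vector fields $X,Y$ that span $\ker\omega$, the relation $d\omega(X,Y)=-\omega([X,Y])$, valid since $\omega(X)=\omega(Y)=0$, shows that $\omega\wedge d\omega=0$ is equivalent to $\omega([X,Y])=0$, i.e.\ to the distribution being closed under the Lie bracket. The local existence of integral surfaces then follows from the Frobenius theorem, which is the genuine analytic input and the step I would either cite or reconstruct by straightening the commuting frame into coordinates.
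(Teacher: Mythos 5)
The paper gives no proof of this statement: it is quoted as the classical Jacobi Theorem from Aminov's book \cite[p.~2]{MR1749926} and used as a black box, so there is no internal argument to compare yours with. Judged on its own merits, your proof is the standard one and it is correct. The identity $\omega\wedge d\omega=\langle\xi,\operatorname{curl}\xi\rangle\,dx\wedge dy\wedge dz$ is computed correctly; the forward direction via an integrating factor $\xi=\mu\,\nabla f$ is sound (note only that $\mu=\langle\xi,\nabla f\rangle/|\nabla f|^{2}$ is differentiable because $\xi$ is $C^{k}$ with $k\geq 3$ and $\nabla f\neq 0$, so the computation $\operatorname{curl}\xi=\nabla\mu\times\nabla f$ is legitimate); and the converse correctly translates $\omega\wedge d\omega=0$ into involutivity of $\ker\omega$ through the relation $d\omega(X,Y)=-\omega([X,Y])$, valid since $\omega(X)=\omega(Y)=0$, after which the Frobenius theorem supplies the integral surfaces. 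Invoking Frobenius at that point is not a gap: it is precisely the analytic content of the statement, which is the three-dimensional instance of the Frobenius criterion, and the conclusion it yields --- a local foliation by surfaces orthogonal to $\xi$ --- is the intended meaning of ``a family of surfaces orthogonal to $\xi$'' here.
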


A plane field $\xi$ is said to be \emph{completely integrable} if $\langle\xi,curl(\xi)\rangle\equiv0$. A surface of the
family of surfaces orthogonal to $\xi$ is called an integral surface.

\begin{figure}
  \includegraphics[angle=90,scale=0.4]{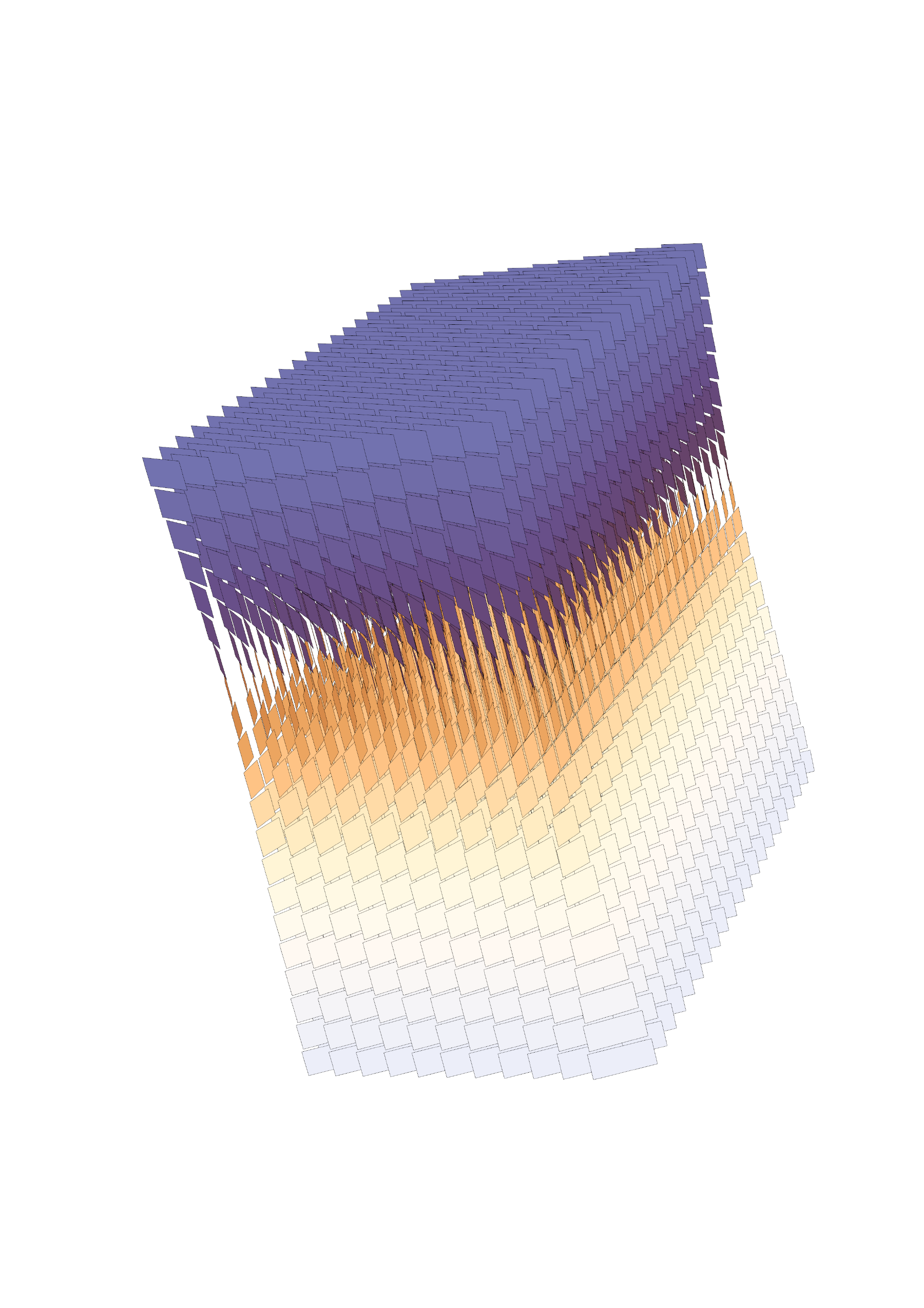}
  \caption{Plane field $\xi$ in $\mathbb{R}^3$ defined by the 1-form $dz-ydx=0$. The vector field
  $\xi$ is given by $\xi(x,y,z)=(-y,0,1)$.}\label{pf}
\end{figure}

\subsection{Normal curvature of a plane field}

\begin{defn}[{\cite[p. 8]{MR1749926}}]
The \emph{normal curvature} $k_{n}$ of a plane field in the direction $dr$ orthogonal to $\xi$   is defined by
\begin{equation}\label{nc}
k_{n}=\frac{\langle d^2r,\xi \rangle}{\langle dr,dr\rangle}=-\frac{\langle dr, d\xi \rangle}{\langle dr,dr\rangle}.
\end{equation}
\end{defn}

This definition agrees with the classical one given by L. Euler, see   \cite{Euler1760}.

In the plane $\pi(p_0,dr)$ generated by $\xi(p_0)$ and $dr$ (direction orthogonal to $\xi(p_0)$) we have a line field  $\ell(p) $ orthogonal to vector $\bar{\xi}(p)\in \pi(p_0,dr)$ obtained projecting $\xi(p)$ in the plane $\pi(p_0,dr)$, with $p\in\pi(p_0,dr)$.  The integral curves $\varphi_p(t)$  of the line field  $\ell  $ are regular curves and $k_n(p_0,dr)$ is the plane curvature of $\varphi_{p_0}(t)$ at $t=0$. See Fig. \ref{fig:euler}.

\begin{figure}[H]
	\begin{center}
		 \includegraphics[scale=0.6]{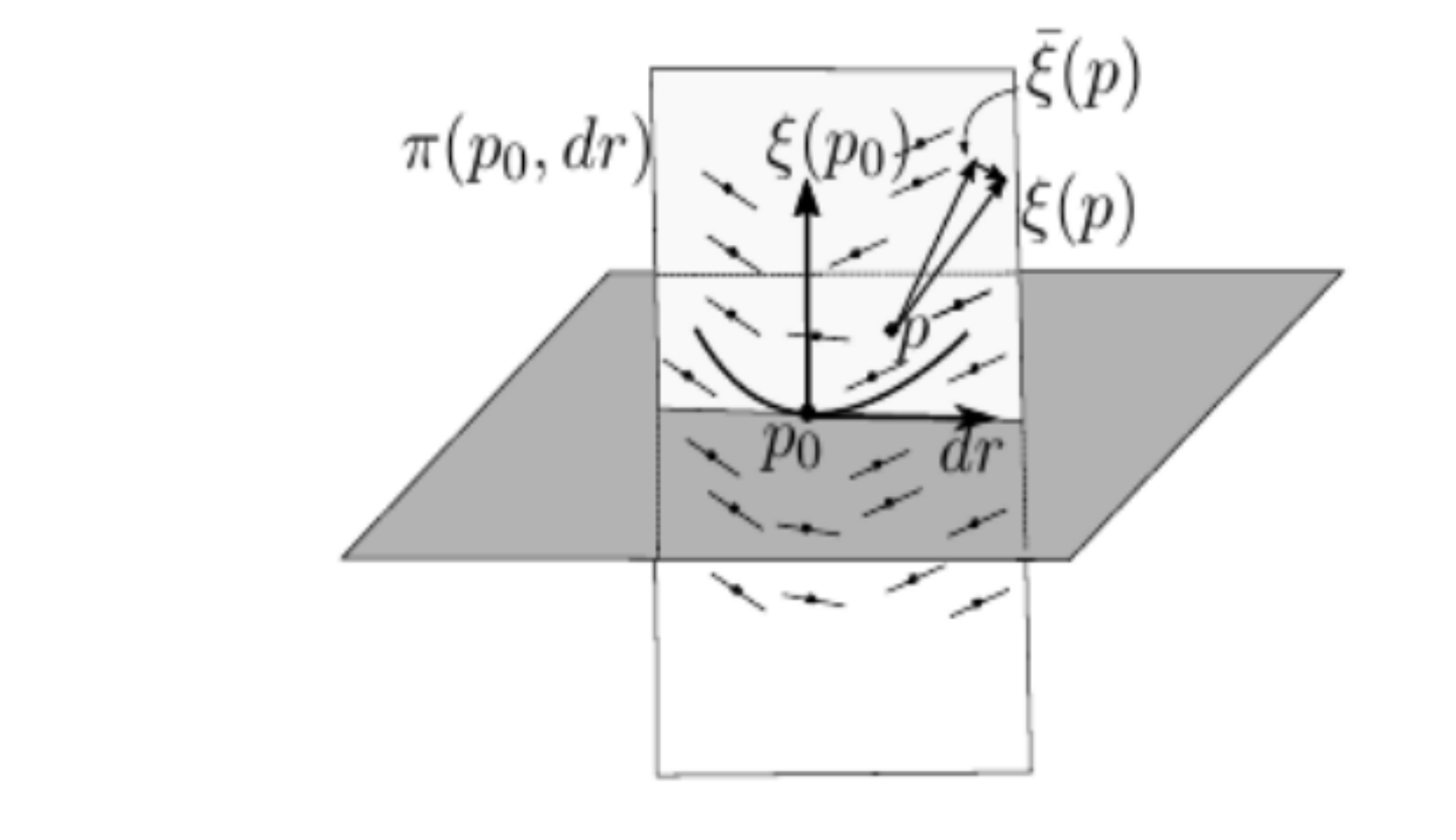}
		\caption {Line field and normal curvature   $k_n(p_0,dr)$. \label{fig:euler} }
	\end{center}
\end{figure}

\subsection{$\xi$-asymptotic lines and parabolic points of a plane field}

The $\xi$-asymptotic directions of a plane field $\xi$ are defined by the following implicit differential equation
\begin{equation}\label{eqla}
\begin{split}
\langle\xi,dr\rangle=0,\ \ \langle d\xi,dr\rangle=0.
\end{split}
\end{equation}
and will referred as the    implicit differential equation  of the $\xi$-asymptotic lines.

A solution $dr$ of  equation \eqref{eqla} is called an  \emph{$\xi$-asymptotic direction}. A curve $\gamma$ in $\mathbb{R}^3$ is an \emph{$\xi$-asymptotic line}  if $\gamma$ is an integral curve of equation \eqref{eqla}. Analogously to the case of asymptotic lines on surfaces, for plane fields the osculating plane of an $\xi-$asymptotic line coincides with the plane of the distribution of planes passing through the point of the curve.  See also \cite[page 29]{MR1796237}.

\begin{defn}
If at a point $r$ there exists two real distinct $\xi$-asymptotic directions (resp. two complex $\xi$-asymptotic directions), then $r$ is called a \emph{hyperbolic point}
(resp. \emph{elliptic point}).

\end{defn}

\begin{defn}
If at $r$ the two $\xi$-asymptotic directions coincide  or  all the directions
are $\xi$-asymptotic directions then $r$ is called a  \emph{parabolic point}.
\end{defn}

\begin{ex}
The circle in $\mathbb{R}^3$ given by $x^2+y^2=1$, $z=0$, is an $\xi$-asymptotic line without parabolic points of the plane field $\xi$ defined by   the orthogonal vector field
$\xi=(\rho,\varrho,\sigma)$, where $\rho=x^2yz+y^3z-x^2y-y^3+xz-2yz+y$, $\varrho=x^3-x^3z-xy^2z+xy^2+2xz+yz-x$ and $\sigma=-x^2-y^2$. The plane field $\xi$ is not completely integrable.
By the Theorem \ref{panov},
 this circle cannot
be an asymptotic line of a regular surface $z=\varphi(x,y)$.
\end{ex}

\begin{prop}\label{prop3}
Given a plane field $\xi$, let $\varphi:\mathbb{R}^3\rightarrow \mathbb{R}$ be a differentiable nonvanishing function. Then a curve $\gamma$ is an $\xi$-asymptotic line     if, and only if,
$\gamma$ is an $\xi$-asymptotic line of the plane field $\widetilde{\xi}$ orthogonal to the vector field $\widetilde{\xi}=\varphi\xi$.
\end{prop}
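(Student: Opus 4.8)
The plan is to verify directly that the two defining equations of \eqref{eqla} hold simultaneously for $\xi$ along $\gamma$ if and only if they hold for $\widetilde{\xi}=\varphi\xi$, using only the Leibniz rule and the hypothesis that $\varphi$ never vanishes. Since being an $\xi$-asymptotic line means precisely that $dr=\gamma'$ solves the system \eqref{eqla} at every point of $\gamma$, it suffices to compare the two equations pointwise.

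First I would treat the incidence condition. By bilinearity of the inner product,
\[
\langle\widetilde{\xi},dr\rangle=\langle\varphi\,\xi,dr\rangle=\varphi\,\langle\xi,dr\rangle,
\]
so, because $\varphi(p)\neq0$ for every $p\in\mathbb{R}^3$, one has $\langle\widetilde{\xi},dr\rangle=0$ exactly when $\langle\xi,dr\rangle=0$. Hence $\gamma$ is tangent to the plane field defined by $\widetilde{\xi}$ if and only if it is tangent to the plane field defined by $\xi$, which already settles the first of the two equations in \eqref{eqla}.

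Next I would handle the second equation. Differentiating the product yields $d\widetilde{\xi}=(d\varphi)\,\xi+\varphi\,d\xi$, and therefore
\[
\langle d\widetilde{\xi},dr\rangle=(d\varphi)\,\langle\xi,dr\rangle+\varphi\,\langle d\xi,dr\rangle.
\]
The key step is to invoke the incidence equation already established: along a curve tangent to the plane field one has $\langle\xi,dr\rangle=0$, so the summand $(d\varphi)\,\langle\xi,dr\rangle$ vanishes identically and the expression collapses to $\langle d\widetilde{\xi},dr\rangle=\varphi\,\langle d\xi,dr\rangle$. Since $\varphi\neq0$, this vanishes precisely when $\langle d\xi,dr\rangle=0$. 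Combining the two equivalences shows that $dr=\gamma'$ solves \eqref{eqla} for $\widetilde{\xi}$ if and only if it solves \eqref{eqla} for $\xi$, which is the assertion.

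I do not anticipate a genuine obstacle, as the statement reduces to an elementary computation; the only point demanding care is that the extra term $(d\varphi)\,\xi$ produced by the product rule is not controlled on its own, but is annihilated by the first of the two asymptotic equations. The two equations of \eqref{eqla} must therefore be used together rather than in isolation, and this coupling is exactly what makes the asymptotic condition depend only on the plane field and not on the particular normal vector field chosen to represent it.
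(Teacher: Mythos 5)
Your proposal is correct and follows essentially the same route as the paper's proof: both expand $\langle\widetilde{\xi},dr\rangle=\varphi\langle\xi,dr\rangle$ and $\langle d\widetilde{\xi},dr\rangle=d\varphi(dr)\langle\xi,dr\rangle+\varphi\langle d\xi,dr\rangle$ via the Leibniz rule, observe that the cross term is killed by the first equation of \eqref{eqla}, and conclude using that $\varphi$ never vanishes. Your write-up merely makes explicit the coupling of the two equations that the paper leaves implicit.
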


\begin{proof}
The implicit differential equation  of $\xi$-asymptotic lines of $\widetilde{\xi}$ is  given by
\begin{equation}
\langle\widetilde{\xi},dr\rangle=\varphi\langle\xi,dr\rangle=0, \ \
\langle d\widetilde{\xi}(dr),dr\rangle=d\varphi(dr)\langle\xi,dr\rangle+\varphi \langle d\xi(dr),dr\rangle=0.
\end{equation}
 Then $\gamma$ is an $\xi$-asymptotic line of $\xi$ if, and only if,
$\gamma$ is an $\xi$-asymptotic line of the plane field $\widetilde{\xi}$.
\end{proof}

\subsection{Tubular neighborhood of a integral curve of a plane field}

Let $\xi$ be a plane field orthogonal to a vector field $\xi(x,y,z)$. Then
$d\xi=\xi_{x}dx+\xi_{y}dy+\xi_{z}dz$.
Let $\gamma(x)=(\gamma_{1}(x),\gamma_{2}(x),\gamma_{3}(x))$ be a curve
such that $(\gamma_{1}'(x),\gamma_{2}'(x))\neq(0,0)$ for all $x$. Set $X(x)=\gamma'(x)$, $Y(x)=(\gamma_{2}'(x),-\gamma'(x),0)$,
$Z(x)=(X\wedge Y)(x)$ and $\alpha:\mathbb{R}^3\rightarrow\mathbb{R}^3$,
\begin{equation}\label{tubo}
\alpha(x,y,z)=\gamma(x)+yY(x)+zZ(x).
\end{equation}
The map \eqref{tubo} is a parametrization of a tubular neighborhood of $\gamma$. At this neighborhood, the position point is given by $r=\alpha(x,y,z)$
and then $dr=d\alpha=\alpha_{x}dx+\alpha_{y}dy+\alpha_{z}dz$. It follows that
the implicit differential equation \eqref{eqla} of the $\xi$-asymptotic lines, at this neighborhood, is given by
\begin{equation}\label{eqlaT}
\begin{split}
&\langle \xi,d\alpha\rangle=adx+bdy+cdz=0, \\
&\langle d\xi,d\alpha\rangle=L_{1}dx^2+L_{2}dxdy+L_{3}dy^2+L_{4}dxdz+L_{5}dydz+L_{6}dz^2=0,
\end{split}
\end{equation}
\noindent where,
\begin{equation}
\begin{split}
a=\langle\xi,\alpha_{x}\rangle, \ \ b=\langle\xi,\alpha_{y}\rangle, \ \ c=\langle\xi,\alpha_{z}\rangle,
\end{split}
\end{equation}
\noindent and
\begin{equation}
\begin{split}
&L_{1}=\langle\xi_{x},\alpha_{x}\rangle, \ \ L_{2}=\langle\xi_{x},\alpha_{y}\rangle+\langle\xi_{y},\alpha_{x}\rangle, \ \ L_{3}=\langle\xi_{y},\alpha_{y}\rangle,\\
&L_{4}=\langle\xi_{x},\alpha_{z}\rangle+\langle\xi_{z},\alpha_{x}\rangle, \ \ L_{5}=\langle\xi_{y},\alpha_{z}\rangle+\langle\xi_{z},\alpha_{y}\rangle, \ \ L_{6}=\langle\xi_{z},\alpha_{z}\rangle.
\end{split}
\end{equation}

\begin{prop}\label{propeqlaV}
Let $\gamma(x)=(\gamma_{1}(x),\gamma_{2}(x),\gamma_{3}(x))$ be a curve
such that, for all $x$, $(\gamma_{1}'(x),\gamma_{2}'(x))\neq(0,0)$. Consider the tubular neighborhood \eqref{tubo}.
If $\xi$ is a plane field such that $\frac{a}{c}$ and $\frac{b}{c}$ are well defined in a neighborhood of $\gamma$, where $a,b,c$ are given by \eqref{eqlaT}, then
the implicit differential equation  of the $\xi$-asymptotic lines, in this neighborhood,
is given by
\begin{equation}
\begin{split}\label{eqlaV}
dz=-\left(\frac{a}{c}\right)dx-\left(\frac{b}{c}\right)dy, \quad edx^2+2fdxdy+gdy^2=0,
\end{split}
\end{equation}
\noindent where,
{\small  
\begin{equation}
\begin{split}
e=L_{1}-\frac{aL_{4}}{c}+\frac{a^2L_{6}}{c^2}, \ \ g=L_{3}-\frac{bL_{5}}{c}+\frac{b^2L_{6}}{c^2}, \ \ f=\frac{L_{2}}{2}-\frac{(aL_{5}+bL_{4})}{2c}+\frac{abL_{6}}{2c^2}.
\end{split}
\end{equation}
}
Furthermore, in this neighborhood, the parabolic set of $\xi$ is given by $eg-f^2=0$.
\end{prop}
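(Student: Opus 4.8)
The plan is to eliminate the differential $dz$ from the pair \eqref{eqlaT}, using the standing hypothesis that $a/c$ and $b/c$ are well defined, i.e.\ that $c\neq0$ in the neighborhood. Under this hypothesis the linear equation $a\,dx+b\,dy+c\,dz=0$ determines $dz$ uniquely as $dz=-(a/c)\,dx-(b/c)\,dy$, which is the first relation in \eqref{eqlaV}. Geometrically, this parametrizes the plane $\langle\xi,dr\rangle=0$ of the distribution by the two free differentials $(dx,dy)$, so that the second, quadratic equation $\langle d\xi,d\alpha\rangle=0$ becomes a binary quadratic form in $(dx,dy)$ alone. The whole statement then splits into two tasks: identifying the coefficients of that form, and reading off the parabolic locus from it.

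First I would carry out the substitution of $dz=-(a/c)\,dx-(b/c)\,dy$ into the quadratic equation of \eqref{eqlaT}. The three terms containing $dz$ expand as
\[
dx\,dz=-\tfrac{a}{c}\,dx^2-\tfrac{b}{c}\,dx\,dy,\qquad dy\,dz=-\tfrac{a}{c}\,dx\,dy-\tfrac{b}{c}\,dy^2,
\]
\[
dz^2=\tfrac{a^2}{c^2}\,dx^2+\tfrac{2ab}{c^2}\,dx\,dy+\tfrac{b^2}{c^2}\,dy^2.
\]
Collecting the coefficient of $dx^2$ produces $L_1-aL_4/c+a^2L_6/c^2=e$, and collecting that of $dy^2$ produces $L_3-bL_5/c+b^2L_6/c^2=g$; gathering the mixed monomial $dx\,dy$ produces the coefficient recorded as $2f$. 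This is an entirely mechanical expansion, needing only the binomial expansion of $dz^2$ and the two products $dx\,dz$, $dy\,dz$, so I expect no difficulty beyond careful bookkeeping.

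For the parabolic set I would appeal directly to the definitions of hyperbolic, elliptic and parabolic points. After the elimination, the $\xi$-asymptotic directions at a point $r$ are exactly the projective roots $(dx:dy)$ of $q(dx,dy)=e\,dx^2+2f\,dx\,dy+g\,dy^2$. Setting $t=dx/dy$ and regarding $q$ as $e\,t^2+2f\,t+g$, its two roots are real and distinct, complex conjugate, or coincident according to the sign of the discriminant $(2f)^2-4eg=4(f^2-eg)$; these are respectively the hyperbolic, elliptic and parabolic cases. Thus the two $\xi$-asymptotic directions merge precisely when $eg-f^2=0$. The remaining possibility in the definition of a parabolic point, namely that \emph{all} directions are $\xi$-asymptotic, corresponds to $e=f=g=0$, which also lies in $\{eg-f^2=0\}$; hence the parabolic set in this neighborhood is cut out by $eg-f^2=0$.

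I expect the only delicate point to be the bookkeeping of the mixed monomial $dx\,dy$: its full coefficient collects contributions from $L_2$, from $L_4\,dx\,dz$ and $L_5\,dy\,dz$, and from the cross term in $dz^2$, and one must keep straight the factor $2$ relating the symmetric coefficient $f$ to the total coefficient $2f$ of $dx\,dy$. Once this is tracked correctly, reading off $e,f,g$ and matching the discriminant locus $eg-f^2=0$ with the parabolic set finishes the proof.
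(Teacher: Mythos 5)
Your proposal is correct and is essentially the paper's own proof: solve the linear equation of \eqref{eqlaT} for $dz$, substitute into the quadratic one, and identify the parabolic set as the discriminant locus $eg-f^2=0$, including the degenerate case $e=f=g=0$. One caution about the single step you deferred: carrying out the $dx\,dy$ bookkeeping with your (correct) expansions of $dx\,dz$, $dy\,dz$ and $dz^2$ gives $2f=L_2-\frac{aL_5+bL_4}{c}+\frac{2ab\,L_6}{c^2}$, hence $f=\frac{L_2}{2}-\frac{aL_5+bL_4}{2c}+\frac{ab\,L_6}{c^2}$, whereas the proposition prints the last term as $\frac{ab\,L_6}{2c^2}$; that printed term is off by a factor of $2$ (a typo in the statement, which the paper's equally terse proof also passes over). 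So your claim that gathering the mixed monomial ``produces the coefficient recorded as $2f$'' would, if actually checked, reveal this discrepancy rather than confirm the printed formula --- the method is right, but the flagged bookkeeping genuinely needed to be done.
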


\begin{proof}
In a neighborhood of $\gamma$, solve the first equation of \eqref{eqlaT} for $dz$ to get the first equation of \eqref{eqlaV}.
Replace this $dz$ in the second equation of \eqref{eqlaT} to get the second equation of \eqref{eqlaV}.

If $eg-f^2<0$ at a point (resp. $eg-f^2>0$), then the equations \eqref{eqlaV} defines two distinct $\xi$-asymptotic directions at this point (resp. two complex $\xi$-asymptotic directions).

If $eg-f^2=0$ at a point, then at it the $\xi$-asymptotic directions coincide or, if $e=g=f=0$, all directions are $\xi$-asymptotic directions.
\end{proof}

\begin{defn}[{\cite[p. 11]{MR1749926}}]
\label{defHK}
Let $\xi$ be a plane field satisfying the assumptions of  Lemma \ref{propeqlaV}. The function defined by $\mathcal{K}=eg-f^2$ is called the
\emph{Gaussian curvature}  of $\xi$.
\end{defn}

\begin{lem}\label{p1}
Let $\gamma(x)=(\gamma_{1}(x),\gamma_{2}(x),\gamma_{3}(x))$ be an $\xi$-asymptotic line of a plane field $\xi$,
such that $(\gamma_{1}'(x),\gamma_{2}'(x))\neq(0,0)$ for all $x$. Consider the tubular neighborhood \eqref{tubo}.
Then, in a neighborhood of $\gamma$, the vector field $\xi$ is given by
{\small  
\begin{equation}\label{arnfxi}
\begin{split}
&\xi(x,y,z)=l_{0}(x)Y(x)+k_{0}(x)Z(x)\\
&+\left(yk_{1}(x)+zl_{1}(x)+\left(\frac{y^2}{2}\right)\widetilde{k}_{1}(x)+yz\widetilde{j}_{1}(x)+\left(\frac{z^2}{2}\right)\widetilde{l}_{1}(x)+A(x,y,z)\right)X(x)\\
&+\left(yk_{2}(x)+zl_{2}(x)+\left(\frac{y^2}{2}\right)\widetilde{k}_{2}(x)+yz\widetilde{j}_{2}(x)+\left(\frac{z^2}{2}\right)\widetilde{l}_{2}(x)+B(x,y,z)\right)Y(x)\\
&+\left(yk_{3}(x)+zl_{3}(x)+\left(\frac{y^2}{2}\right)\widetilde{k}_{3}(x)+yz\widetilde{j}_{3}(x)+\left(\frac{z^2}{3}\right)\widetilde{l}_{3}(x)+C(x,y,z)\right)Z(x),
\end{split}
\end{equation}
}
\noindent where $X(x)=\gamma'(x)$, $Y(x)=(\gamma_{2}'(x),-\gamma_{1}'(x),0)$, $Z(x)=(X\wedge Y)(x)$, $A(x,0,0)=B(x,0,0)=C(x,0,0)=0$ and
\begin{equation}\label{sol2}
k_{0}=\gamma_{1}'\gamma_{2}''-\gamma_{2}'\gamma_{1}'', \ \ l_{0}=(\gamma_{3}'\gamma_{1}''-\gamma_{1}'\gamma_{3}'')\gamma_{1}'+(\gamma_{3}'\gamma_{2}''-\gamma_{2}'\gamma_{3}'')\gamma_{2}'.
\end{equation}
Furthermore, if $\gamma_{1}'(x)\gamma_{2}''(x)-\gamma_{2}'(x)\gamma_{1}''(x)\neq0$ for all $x$, then the implicit differential equation
of
the $\xi$-asymptotic lines  is given by \eqref{eqlaV}.
\end{lem}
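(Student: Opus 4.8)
The plan is to obtain \eqref{arnfxi} as the second-order Taylor expansion of $\xi$ along $\gamma$, written in the moving frame $\{X,Y,Z\}$, and to extract the constant-order coefficients \eqref{sol2} from the asymptotic conditions \eqref{eqla}. First I would verify that $\{X(x),Y(x),Z(x)\}$ is an orthogonal basis of $\mathbb R^3$ for every $x$: since $(\gamma_1',\gamma_2')\neq(0,0)$ we have $Y\neq0$, and $\langle X,Y\rangle=\gamma_1'\gamma_2'-\gamma_2'\gamma_1'=0$ with $X=\gamma'\neq0$, so $X,Y$ are independent, $Z=X\wedge Y\neq0$, and $|Z|^2=|X|^2|Y|^2>0$. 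Writing the given $\xi$ in this frame and Taylor-expanding each component in $(y,z)$ to second order (with the cubic-and-higher remainders collected into $A,B,C$, which vanish at $y=z=0$) produces the generic shape of \eqref{arnfxi}; the only content left to prove is the structure of the constant-order part.

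The next step is to evaluate the asymptotic conditions on $\gamma$, where $\alpha(x,0,0)=\gamma(x)$, $\alpha_x=X$, $\alpha_y=Y$, $\alpha_z=Z$. Set $\xi_0(x)=\xi(\gamma(x))$. The first equation in \eqref{eqla} gives $\langle\xi_0,X\rangle=0$, so $\xi_0$ has no $X$-component and we may write $\xi_0=l_0Y+k_0Z$; this is precisely why the $X$-line of \eqref{arnfxi} carries no constant term. The second equation in \eqref{eqla} reads $\langle\xi_0',X\rangle=0$. Differentiating $\langle Y,X\rangle=\langle Z,X\rangle=0$, I would compute $\langle Y',X\rangle=-\langle Y,\gamma''\rangle=\gamma_1'\gamma_2''-\gamma_2'\gamma_1''$ and $\langle Z',X\rangle=-\langle Z,\gamma''\rangle$, and then check that $\langle Z,\gamma''\rangle$ equals the expression for $l_0$ in \eqref{sol2}. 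Substituting, $\langle\xi_0',X\rangle=l_0\langle Y',X\rangle+k_0\langle Z',X\rangle=0$ collapses to the proportionality $l_0\,(\gamma_1'\gamma_2''-\gamma_2'\gamma_1'')=k_0\,\langle Z,\gamma''\rangle$, i.e. $\xi_0$ is parallel to the osculating normal $\gamma'\wedge\gamma''$. This I would confirm via the identity $l_0Y+k_0Z=-|Y|^2\,\gamma'\wedge\gamma''$, obtained from $\langle\gamma'\wedge\gamma'',Y\rangle$ together with $\langle\gamma'\wedge\gamma'',Z\rangle=|X|^2\langle\gamma'',Y\rangle$. Since the plane field determines $\xi$ only up to a nonvanishing scalar, Proposition \ref{prop3} allows me to normalize $\xi$ along $\gamma$ so that $\xi_0=-|Y|^2\,\gamma'\wedge\gamma''$; reading off the components then yields exactly $k_0=\gamma_1'\gamma_2''-\gamma_2'\gamma_1''$ and the stated $l_0$.

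For the final assertion I would evaluate $c=\langle\xi,\alpha_z\rangle$ on $\gamma$: since $\alpha_z|_\gamma=Z$ and $\xi_0=l_0Y+k_0Z$ with $Y\perp Z$, one obtains $c(x,0,0)=k_0(x)\,|Z(x)|^2$. If $\gamma_1'\gamma_2''-\gamma_2'\gamma_1''=k_0\neq0$ for all $x$, then $c\neq0$ on $\gamma$, hence $c\neq0$ throughout a tubular neighborhood by continuity, so $a/c$ and $b/c$ are well defined there and Proposition \ref{propeqlaV} applies, giving the implicit differential equation \eqref{eqlaV}.

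The main obstacle is the bookkeeping of the frame derivatives in the second step, in particular recognizing $\langle Z',X\rangle=-l_0$ and verifying the identification $l_0Y+k_0Z=-|Y|^2\,\gamma'\wedge\gamma''$, which is what turns the single scalar relation coming from \eqref{eqla} into the two explicit formulas \eqref{sol2}. A secondary technical point is making the normalization via Proposition \ref{prop3} rigorous at points where $\gamma'\wedge\gamma''$ degenerates (isolated inflections of the space curve, where the normalizing factor is singular), which one disposes of by a continuity argument since the Taylor form \eqref{arnfxi} itself is valid for the unnormalized $\xi$ as well.
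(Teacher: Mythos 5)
Your proposal is correct and follows essentially the same route as the paper's (very terse) proof: tangency of $\gamma$ to the plane field kills the $X$-component of $\xi$ along $\gamma$, the asymptotic condition forces $\xi|_{\gamma}\perp\gamma''$ and hence $\xi|_{\gamma}\parallel\gamma'\wedge\gamma''$, which after the scalar normalization permitted by Proposition \ref{prop3} is exactly \eqref{sol2}, and the final claim follows from $c(x,0,0)=k_{0}|Z|^2\neq0$ together with Proposition \ref{propeqlaV}. The details you supply --- the frame-derivative computation, the identity $l_{0}Y+k_{0}Z=-|Y|^2\,\gamma'\wedge\gamma''$, and the explicit rescaling --- are precisely what the paper leaves implicit, and the caveat you flag at inflection points (where \eqref{sol2} forces the normalized field to vanish, so the statement only holds up to a scalar factor that degenerates there) is a genuine subtlety of the lemma itself that the paper's proof also glosses over.
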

\begin{proof}
The expression \eqref{arnfxi} holds,  since $\gamma$ is an integral curve of the plane field defined by $\xi$.
Also, as $\gamma$ is an $\xi$-asymptotic line, $\langle \xi(x),\gamma''(x) \rangle=0$ for all $x$, which gives the expressions \eqref{sol2}.

If $\gamma_{1}'(x)\gamma_{2}''(x)-\gamma_{2}'(x)\gamma_{1}''(x)\neq0$, then $c(x,0,0)\neq0$. The conclusion then follows from   Proposition \ref{propeqlaV}.
\end{proof}

\section{Finite type $\xi$-asymptotic lines of plane fields}

In this section the following result is established.
\begin{thm}\label{t5}
Any finite type curve is an $\xi$-asymptotic line without parabolic points of a suitable plane field.
\end{thm}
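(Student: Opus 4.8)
The plan is to realize $\gamma$ as an $\xi$-asymptotic line by using Lemma~\ref{p1} as an \emph{ansatz}: any vector field of the shape \eqref{arnfxi} automatically has $\gamma$ as an $\xi$-asymptotic line, because \eqref{sol2} is precisely the translation of the asymptotic condition $\langle\xi(x),\gamma''(x)\rangle=0$. First I would reduce to a point of symbol $\{1,m,n\}$ and write $\gamma(x)=(x,a_mx^m+\cdots,b_nx^n+\cdots)$, set up the adapted frame $X=\gamma'$, $Y=(\gamma_2',-\gamma_1',0)$, $Z=X\wedge Y$, and take $\xi$ of the form \eqref{arnfxi} with $k_0,l_0$ given by \eqref{sol2}. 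The immediate difficulty is that along $\gamma$ one has $\xi|_\gamma=l_0Y+k_0Z$ with $k_0=\gamma_2''=m(m-1)a_mx^{m-2}+\cdots$ and $l_0=-\gamma_3''+\cdots$ of order $x^{n-2}$, so at an inflection point ($x=0$, $m\ge 3$) this canonical normal \emph{vanishes} and fails to define a plane field there. I would resolve this by factoring out the common divisor $x^{m-2}$ (legitimate, since $k_0$ vanishes to order exactly $m-2$ and $l_0$ to order $\ge n-2>m-2$, so the quotient is smooth), replacing $\xi$ by $\bar\xi=x^{-(m-2)}\xi$ for $x\neq0$ and extending it smoothly across $x=0$. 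By Proposition~\ref{prop3} this does not alter the $\xi$-asymptotic lines, and one checks that $\bar\xi(0)=m(m-1)a_m\,Z(0)\neq0$; thus after rescaling $\bar c=\langle\bar\xi,Z\rangle$ is nonzero on a whole interval and Lemma~\ref{p1} together with Proposition~\ref{propeqlaV} apply, giving the implicit equation \eqref{eqlaV}. The \textbf{main obstacle} is exactly this treatment of the inflection points: controlling the vanishing order of $l_0Y+k_0Z$, verifying that the division produces a smooth nonvanishing field with $\bar c\neq0$, and that the rescaled $\bar\xi$ still falls under Lemma~\ref{p1}.

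Next I would compute the coefficients $e,f,g$ of \eqref{eqlaV} restricted to $\gamma$. Since $\bar\xi|_\gamma$ is orthogonal to $X=\gamma'$, one has $a=\langle\bar\xi,\alpha_x\rangle=0$ along $\gamma$; differentiating $\langle\bar\xi|_\gamma,\gamma'\rangle\equiv0$ and using the asymptotic condition $\langle\bar\xi|_\gamma,\gamma''\rangle=0$ gives $L_1=\langle\bar\xi_x,\alpha_x\rangle=-\langle\bar\xi|_\gamma,\gamma''\rangle=0$. Hence $e=L_1=0$, and therefore, along $\gamma$,
\[
\mathcal{K}=eg-f^{2}=-f^{2}\le 0 .
\]
This is the structural heart of the matter: the Gaussian curvature of \emph{any} plane field is automatically $\le 0$ along one of its $\xi$-asymptotic lines, and such a point of $\gamma$ is hyperbolic (no parabolic point) precisely when $f\neq0$. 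The theorem is thereby reduced to arranging $f\neq0$ at every point of $\gamma$.

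Finally I would exhibit the needed freedom. With $a=0$ the coefficient reads $f=\tfrac{1}{2}L_2-\tfrac{bL_4}{2c}$, where $L_2=\langle\bar\xi_x,\alpha_y\rangle+\langle\bar\xi_y,\alpha_x\rangle=\langle(\bar\xi|_\gamma)',Y\rangle+\langle\bar\xi_y,X\rangle$. The term $\langle\bar\xi_y,X\rangle=k_1\,\langle X,X\rangle$, where $k_1$ is the coefficient of $y$ in the $X$-component of \eqref{arnfxi}; this $k_1$ is a free function of $x$, not constrained by \eqref{sol2}. Taking the coefficient $l_1$ of $z$ in the $X$-component to be zero, the quantity $L_4$, and hence $\tfrac{bL_4}{2c}$, becomes a fixed continuous function on the compact parameter interval; then choosing $k_1$ to be a sufficiently large positive constant makes $\tfrac{1}{2}k_1\langle X,X\rangle\ge\tfrac12 k_1$ dominate the remaining fixed terms, forcing $f>0$. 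Consequently $\mathcal{K}=-f^2<0$ along $\gamma$, so $\gamma$ is an $\xi$-asymptotic line without parabolic points of the plane field $\bar\xi$.

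To conclude I would globalize the construction. The binormal line field $\mathbb{R}\,(\gamma'\wedge\gamma'')$ extends smoothly across each inflection point after the rescaling above, yielding a single smooth nonvanishing normal direction along all of $\gamma$; the higher-symbol points being isolated, I would perform the transversal extension near each of them, keeping $\bar\xi|_\gamma$ globally fixed, and patch the transversal jets with a partition of unity. Since $e=0$ depends only on $\bar\xi|_\gamma$ and is thus preserved, while $f$ depends affinely on the free coefficient $k_1$ and can be kept positive on each patch, the averaged field still satisfies $e=0$ and $f>0$, hence $\mathcal{K}<0$ along $\gamma$. Everything past the reduction $e=0$ is soft, precisely because $f$ depends linearly on a completely free jet coefficient.
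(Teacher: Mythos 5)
Your proposal is correct and follows essentially the same route as the paper: the ansatz \eqref{arnfxi}--\eqref{sol2} of Lemma \ref{p1}, factoring out $x^{m-2}$ at inflection points so that $c\neq 0$ and Proposition \ref{propeqlaV} applies, and then exploiting the free transversal coefficient $k_{1}$ to force $f\neq 0$ along $\gamma$, whence $\mathcal{K}=eg-f^{2}=-f^{2}<0$ and there are no parabolic points. The only differences are cosmetic: the paper factors $x^{m-2}$ from the linear equation rather than rescaling the vector field, and instead of your domination argument (taking $k_{1}$ a large constant) it solves explicitly for $k_{1}$ so that $f\equiv H$ as in Proposition \ref{lac}, while your closing partition-of-unity globalization goes beyond the paper's purely local proof.
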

\begin{proof}
Let
{\small  
 $\gamma(x)=(\gamma_{1}(x),\gamma_{2}(x),\gamma_{3}(x))=(x,a_{m}x^m+\mathcal{O}^{m+1}(x),a_{n}x^{n}+\mathcal{O}^{n+1}(x))$ 
}
 be a finite type curve.
Consider the tubular neighborhood $\alpha$ given by \eqref{tubo} and  the
 vector field $\xi$ given by \eqref{arnfxi}, with $k_{0}(x)$, $l_{0}(x)$
given by \eqref{sol2}. Then $\gamma$ is a $\xi$-asymptotic line of the plane field orthogonal to $\xi$.

We have that $a(x,0,0)=0$ and
\begin{equation}
\begin{split} b(x,0,0)&=((\gamma_{1}')^2+(\gamma_{2}')^2)[(\gamma_{1}'\gamma_{1}''+\gamma_{2}'\gamma_{2}'')\gamma_{3}'-((\gamma_{1}')^2+(\gamma_{2}')^2)\gamma_{3}''], \\ c(x,0,0)&=((\gamma_{1}')^2+(\gamma_{2}')^2)((\gamma_{1}')^2+(\gamma_{2}')^2+(\gamma_{3}')^2)(\gamma_{1}'\gamma_{2}''-\gamma_{2}'\gamma_{1}'').
\end{split}
\end{equation}
We can factor the term $x^{m-2}$ from $b(x,0,0)$ and $c(x,0,0)$, $b(x,0,0)=x^{m-2}B(x,0,0)$ and $c(x,0,0)=x^{m-2}C(x,0,0)$
where $B(0,0,0)=0$ and $C(0,0,0)=a_{m}m(m-1)$.

Then, after factoring $x^{m-2}$ from the first equation of \eqref{eqlaT}, the equation \eqref{eqlaT} will become
$adx+bdy+cdz=0$, where $a(x,0,0)=0$, $b(0,0,0)=0$ and $c(0,0,0)=a_{m}m(m-1)\neq0$. By Proposition \ref{propeqlaV}, in a neighborhood of $(0,0,0)$, the equation of $\xi$-asymptotic lines are given by \eqref{eqlaV}.

Let $k_{1}(x)$ be defined by
{\small  
\begin{equation}
\begin{split}
k_{1}&=\frac{((\gamma_{1}')^2+(\gamma_{2}')^2)^2[(\gamma_{2}''\gamma_{3}'''-\gamma_{3}''\gamma_{2}''')\gamma_{1}'
+(\gamma_{3}''\gamma_{1}'''-\gamma_{1}''\gamma_{3}''')\gamma_{2}'+(\gamma_{1}''\gamma_{2}'''-\gamma_{2}''\gamma_{1}''')\gamma_{3}']}{((\gamma_{1}')^2+(\gamma_{2}')^2+(\gamma_{3}')^2)(\gamma_{1}'\gamma_{2}''-\gamma_{2}'\gamma_{1}'')}\\
&+\frac{[(\gamma_{1}'\gamma_{1}''+\gamma_{2}'\gamma_{2}'')\gamma_{3}'-((\gamma_{1})^2+(\gamma_{2})^2)\gamma_{3}'']l_{1}
+2(\gamma_{1}'\gamma_{2}''-\gamma_{2}'\gamma_{1}'')}{((\gamma_{1}')^2+(\gamma_{2}')^2+(\gamma_{3}')^2)(\gamma_{1}'\gamma_{2}''-\gamma_{2}'\gamma_{1}'')}.
\end{split}
\end{equation}
}
Then $\mathcal{K}(x,0,0)=-1$.
\end{proof}

\section{Hyperbolic closed finite type $\xi$-asymptotic line}
Examples of hyperbolic asymptotic lines on surfaces are given in \cite{MR1725206, MR1634428,  MR2532372}.

In this section it will be given an example of a hyperbolic closed $\xi$-asymptotic line of finite type for a suitable plane field.

\begin{prop}\label{lac}
Let $\gamma$, $\gamma(x)=(\gamma_{1}(x),\gamma_{2}(x),\gamma_{3}(x))$, be a curve
such that $(\gamma_{1}'(x),\gamma_{2}'(x))\neq(0,0)$, $\gamma_{1}'(x)\gamma_{2}''(x)-\gamma_{2}'(x)\gamma_{1}''(x)\neq0$ for all $x$.
Consider the tubular neighborhood $\alpha$ given by \eqref{tubo} and  the
 vector field $\xi$ given by \eqref{arnfxi}, with $k_{0}(x)$, $l_{0}(x)$
given by \eqref{sol2}. Let $H(x)$ be a nonvanishing function and define $k_{1}(x)$ by
{\small  
\begin{equation}\label{lack1}
\begin{split}
k_{1}&=\frac{((\gamma_{1}')^2+(\gamma_{2}')^2)^2[(\gamma_{2}''\gamma_{3}'''-\gamma_{3}''\gamma_{2}''')\gamma_{1}'
+(\gamma_{3}''\gamma_{1}'''-\gamma_{1}''\gamma_{3}''')\gamma_{2}'+(\gamma_{1}''\gamma_{2}'''-\gamma_{2}''\gamma_{1}''')\gamma_{3}']}{((\gamma_{1}')^2+(\gamma_{2}')^2+(\gamma_{3}')^2)(\gamma_{1}'\gamma_{2}''-\gamma_{2}'\gamma_{1}'')}\\
&+\frac{[(\gamma_{1}'\gamma_{1}''+\gamma_{2}'\gamma_{2}'')\gamma_{3}'-((\gamma_{1})^2+(\gamma_{2})^2)\gamma_{3}'']l_{1}
+2(\gamma_{1}'\gamma_{2}''-\gamma_{2}'\gamma_{1}'')H}{((\gamma_{1}')^2+(\gamma_{2}')^2+(\gamma_{3}')^2)(\gamma_{1}'\gamma_{2}''-\gamma_{2}'\gamma_{1}'')}.
\end{split}
\end{equation}
}
Then, $\gamma$ is a $\xi$-asymptotic line, without parabolic points, of the
plane field orthogonal to the vector field $\xi$.

Furthermore, $\mathcal{K}(x,0,0)=-(H(x))^2$.
 
\end{prop}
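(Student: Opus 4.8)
The plan is first to verify that $\gamma$ is an $\xi$-asymptotic line and then to compute the Gaussian curvature $\mathcal{K}(x,0,0)$ of $\xi$ along $\gamma$, showing it equals $-(H(x))^2$. By the very form of \eqref{arnfxi} we have $\xi(x,0,0)=l_{0}(x)Y(x)+k_{0}(x)Z(x)$, which is orthogonal to $X(x)=\gamma'(x)$; hence $\gamma$ is an integral curve of the plane field orthogonal to $\xi$. Moreover the choice \eqref{sol2} of $k_{0},l_{0}$ is precisely the one forcing $\langle \xi(x,0,0),\gamma''(x)\rangle=0$, so $\gamma''$ lies in the distribution along $\gamma$ and $\gamma$ is an $\xi$-asymptotic line; this is exactly the content of Lemma \ref{p1}. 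Since by hypothesis $\gamma_{1}'\gamma_{2}''-\gamma_{2}'\gamma_{1}''\neq0$, we have $c(x,0,0)\neq0$, so Proposition \ref{propeqlaV} applies and the $\xi$-asymptotic lines near $\gamma$ are governed by \eqref{eqlaV}, with $\mathcal{K}=eg-f^2$ as in Definition \ref{defHK}.

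Next I would reduce $\mathcal{K}(x,0,0)$ to a single coefficient. On $\gamma$ one has $\alpha_{x}(x,0,0)=X=\gamma'$, whence $a(x,0,0)=\langle\xi,\alpha_{x}\rangle(x,0,0)=\langle l_{0}Y+k_{0}Z,\gamma'\rangle=0$. Differentiating the identity $\langle \xi(x,0,0),\gamma'(x)\rangle=0$ in $x$ and inserting the asymptotic relation $\langle \xi(x,0,0),\gamma''(x)\rangle=0$ yields $L_{1}(x,0,0)=\langle\xi_{x},\alpha_{x}\rangle(x,0,0)=0$. Because $a(x,0,0)=0$, the expression for $e$ in \eqref{eqlaV} collapses to $e(x,0,0)=L_{1}(x,0,0)=0$; geometrically this is just the fact that the tangent $dx$-direction of $\gamma$ is asymptotic. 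Consequently $\mathcal{K}(x,0,0)=(eg-f^2)(x,0,0)=-f(x,0,0)^2$, and the whole problem reduces to computing $f(x,0,0)$.

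Then I would display $f(x,0,0)$ as an affine function of $k_{1}$. From \eqref{arnfxi} one reads $\xi_{y}(x,0,0)=k_{1}X+k_{2}Y+k_{3}Z$, so the only way $k_{1}$ enters the coefficients along $\gamma$ is through $\langle\xi_{y},\alpha_{x}\rangle=k_{1}\abs{\gamma'}^2$ inside $L_{2}$; in particular neither $e$ nor $g$ depends on $k_{1}$. Using $a(x,0,0)=0$ again, $f(x,0,0)=\tfrac12 L_{2}-\tfrac{bL_{4}}{2c}$ on $\gamma$, which is affine in $k_{1}$ with leading coefficient $\tfrac12\abs{\gamma'}^2=\tfrac12((\gamma_{1}')^2+(\gamma_{2}')^2+(\gamma_{3}')^2)$; the remaining, $k_{1}$-independent part is built from the frame derivatives $Y',Z'$ and from the coefficient $l_{1}$ entering $L_{4}$ (hence weighted by $b/c$). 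Imposing $f(x,0,0)=H(x)$ and solving this linear equation for $k_{1}$ produces exactly formula \eqref{lack1}: its first fraction is the $k_{1}$-independent part of $f$, while the summand $2(\gamma_{1}'\gamma_{2}''-\gamma_{2}'\gamma_{1}'')H$ is the prescribed value $H$ after clearing the leading coefficient. With this $k_{1}$ one obtains $\mathcal{K}(x,0,0)=-H(x)^2$, which is strictly negative since $H$ is nonvanishing; therefore every point of $\gamma$ is hyperbolic and $\gamma$ has no parabolic points.

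The main obstacle is this last, purely algebraic, step: checking that solving $f(x,0,0)=H$ reproduces precisely the closed form \eqref{lack1}. It requires carrying all the contributions of the moving frame $\{X,Y,Z\}$ and its derivatives, together with $l_{0},k_{0},l_{1}$, through $L_{2},L_{4},b,c$ and simplifying. This is, however, the very computation already performed in the proof of Theorem \ref{t5}, where $\mathcal{K}(x,0,0)=-1$; the present statement follows by transcribing that computation with the constant $1$ replaced throughout by the function $H$, which explains why \eqref{lack1} differs from the choice made in Theorem \ref{t5} only by the factor $H$ multiplying $2(\gamma_{1}'\gamma_{2}''-\gamma_{2}'\gamma_{1}'')$.
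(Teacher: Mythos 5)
Your proposal is correct and takes essentially the same route as the paper's proof: both reduce the statement to the two identities $e(x,0,0)=0$ and $f(x,0,0)=H(x)$ along $\gamma$, from which $\mathcal{K}(x,0,0)=(eg-f^{2})(x,0,0)=-H(x)^{2}<0$ and the absence of parabolic points follow. In fact your write-up justifies more than the paper, whose proof is a bare ``direct calculations'' assertion: you derive $e(x,0,0)=0$ conceptually by differentiating $\langle\xi(x,0,0),\gamma'(x)\rangle=0$ and using the asymptotic relation, and you exhibit the affine dependence of $f(x,0,0)$ on $k_{1}$ with leading coefficient $\tfrac{1}{2}|\gamma'|^{2}$, deferring only the same closed-form verification of \eqref{lack1} that the paper itself omits.
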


\begin{proof}
By direct calculations, we can see that $\gamma$ is a $\xi$-asymptotic line. The implicit differential equation of the $\xi$-asymptotic lines  are given by \eqref{eqlaV} and $e(x,0,0)=0$,
 $f(x,0,0)=H(x)$. Since $e(x,0,0)=0$, then $\mathcal{K}(x,0,0)=-(H(x))^2$ for all $x$.
\end{proof}

\subsection{Poincar\'e map associated to a closed $\xi$-asymptotic line}

Let $\gamma:[0,l]\rightarrow\mathbb{R}^3$, $\gamma(x)=(\gamma_{1}(x),\gamma_{2}(x),\gamma_{3}(x))$, be a closed $\xi$-asymptotic line, without parabolic points, of a plane field $\xi$,
such that $\gamma(0)=\gamma(l)$, $(\gamma_{1}'(x),\gamma_{2}'(x))\neq(0,0)$, $\gamma_{1}'(x)\gamma_{2}''(x)-\gamma_{2}'(x)\gamma_{1}''(x)\neq0$ for all $x$,
and consider the tubular neighborhood $\alpha$ given by \eqref{tubo}.

This means that $\gamma$ is a regular curve having a projection in a plane which is a strictly locally convex curve.

By the Proposition \ref{p1}, $\xi$ is given by \eqref{arnfxi} and the implicit differential equations of the $\xi$-asymptotic lines  is  given by \eqref{eqlaV}.

Let $\Sigma_{x_{0}}=\{(x_{0},y,z)\}$ be a transversal section. Then $\alpha(\Sigma_{x_{0}})$ is the plane spanned by $Y(x_{0})$ and $Z(x_{0})$.
By Lemma \ref{p1}, in a neighborhood of $\gamma$, the $\xi$-asymptotic line passing through
$\alpha(x_{0},y_{0},z_{0})$ intersects $\alpha(\Sigma_{x_{0}})$ again at the point
\[ \alpha(x_{0}+l,y(x_{0}+l,y_{0},z_{0}),z(x_{0}+l,y_{0},z_{0})),\]
\noindent where $(y(x,y_{0},z_{0}),z(x,y_{0},z_{0}))$ is  solution of the following Cauchy problem
\begin{equation}\label{lacp}
\begin{split}
&\frac{dz}{dx}=-\frac{a}{c}-\left(\frac{b}{c}\right)\frac{dy}{dx}= A+B\frac{dy}{dx},\\
&e+2f\frac{dy}{dx}+g\left(\frac{dy}{dx}\right)^2=0,\\
&(y(x_{0},y_{0},z_{0}),z(x_{0},y_{0},z_{0}))=(y_{0},z_{0}).
\end{split}
\end{equation}

The \emph{Poincar\'e map} $\mathcal{P}$, also called {\it   first return map},  associated to $\gamma$ is defined by $\mathcal{P}:\mathcal{U}\subset \Sigma\rightarrow \Sigma$,
$\mathcal{P}(y_{0},z_{0})=(y(l,y_{0},z_{0}),z(l,y_{0},z_{0}))$. See Fig. \ref{fig:poincaremap}.

A closed $\xi$-asymptotic line $\gamma$ is said to be \emph{hyperbolic} if the eigenvalues of $d\mathcal{P}_{(0,0)}$ does not belongs to $\mathbb{S}^1$.
 See  \cite{pm} for the  generic properties of the Poincar\'e map associated to closed orbits of vector fields.

We will denote by $d\mathcal{P}_{(0,0)}$ the matrix of   the first derivative of the Poincar\'e map evaluated at
$(y_{0},z_{0})=(0,0)$.
  	\begin{figure}[H]\
 	\begin{center}
 	 	 \includegraphics[scale=0.7]{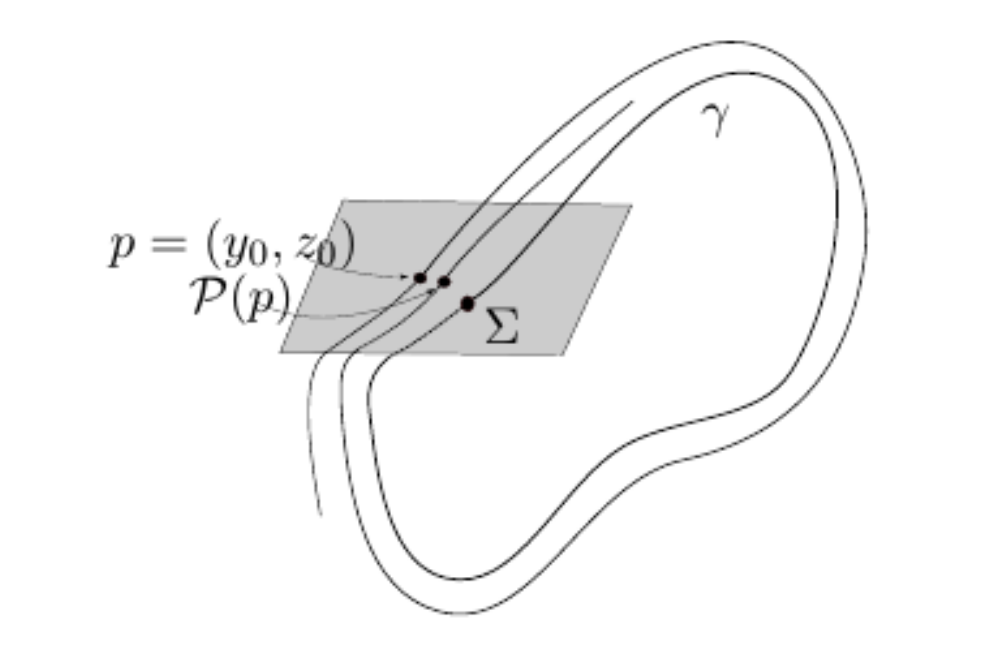}
 		\caption {Poincar\'e return map. \label{fig:poincaremap} }
 	\end{center}
 \end{figure}


\begin{prop}\label{th}
Let $\gamma:[0,l]\rightarrow\mathbb{R}^3$, $\gamma(x)=(\gamma_{1}(x),\gamma_{2}(x),\gamma_{3}(x))$, be a closed $\xi$-asymptotic line, 
 having a projection in a plane which is locally strictly convex curve.
 Let $\mathcal{P}$ be the Poincar\'e map
associated to $\gamma$. Then $d\mathcal{P}_{(0,0)}=\mathcal{Q}(l)$, where $\mathcal{Q}(x)$ is   solution of the following Cauchy problem:
\begin{equation}\label{dp}
\frac{d}{dx}\left(\mathcal{Q}(x)\right)=\mathcal{M}(x)\mathcal{Q}(x), \ \ \mathcal{Q}(0)=\mathcal{I},
\end{equation}
\noindent where $\mathcal{I}$ is the identity matrix, and $\mathcal{M}(x)$, $\mathcal{Q}(x)$ are the matrices given by
{\small
\begin{equation}
\mathcal{M}(x)=
\left(
  \begin{array}{cc}
    -\frac{e_{y}(x,0,0)}{2f(x,0,0)} & -\frac{e_{z}(x,0,0)}{2f(x,0,0)} \\
    \left(A\right)_{y}(x,0,0) & \left(A\right)_{z}(x,0,0) \\
  \end{array}
\right),
\mathcal{Q}(x)=
\left(
  \begin{array}{cc}
    \frac{dy}{dy_{0}}(x,0,0) & \frac{dy}{dz_{0}}(x,0,0) \\
    \frac{dz}{dy_{0}}(x,0,0) & \frac{dz}{dz_{0}}(x,0,0) \\
  \end{array}
\right),
\end{equation}
}
\noindent where $A=-\frac{a}{c}$.
\end{prop}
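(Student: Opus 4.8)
The plan is to realize $\mathcal{P}$ as the time-$l$ map of a genuine smooth planar flow along $\gamma$, and then to identify the derivative of that flow at the fixed point with the fundamental matrix $\mathcal{Q}(l)$. First I would convert the implicit system \eqref{lacp} into an explicit one near $\gamma$. Because $\gamma$ has no parabolic points, Proposition \ref{lac} gives $e(x,0,0)=0$ and $f(x,0,0)=H(x)\neq 0$ along $\gamma$, so that $eg-f^2=-H^2<0$ and the two $\xi$-asymptotic directions are real and distinct there. The branch carried by $\gamma$ is $\frac{dy}{dx}=0$, and at this root the derivative of $e+2f\frac{dy}{dx}+g\left(\frac{dy}{dx}\right)^2$ with respect to $\frac{dy}{dx}$ equals $2f=2H\neq 0$; hence the implicit function theorem produces a smooth function $p(x,y,z)$ with $p(x,0,0)=0$ solving the quadratic. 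Substituting into \eqref{lacp} yields the smooth, $l$-periodic planar system $\frac{dy}{dx}=p(x,y,z)$, $\frac{dz}{dx}=A+Bp$, with $A=-a/c$ and $B=-b/c$, for which $\gamma$ is the periodic solution $(y,z)\equiv(0,0)$ and whose return map on the section $\Sigma$ is exactly $\mathcal{P}$.

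Next I would invoke the standard theorem on differentiable dependence on initial conditions: the matrix $\mathcal{Q}(x)$ of the partial derivatives $\partial(y,z)/\partial(y_0,z_0)$ taken along $\gamma$ satisfies the linear variational equation obtained by interchanging the $x$-derivative with the $(y_0,z_0)$-derivatives. Concretely, $\frac{d}{dx}\mathcal{Q}=\mathcal{M}\mathcal{Q}$ with $\mathcal{Q}(0)=\mathcal{I}$, where $\mathcal{M}(x)$ is the Jacobian in $(y,z)$ of the right-hand side $(p,\,A+Bp)$ evaluated on $(y,z)=(0,0)$; this is precisely the Cauchy problem \eqref{dp}. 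Since $\mathcal{P}(y_0,z_0)=(y(l,y_0,z_0),z(l,y_0,z_0))$, evaluation at $x=l$ gives $d\mathcal{P}_{(0,0)}=\mathcal{Q}(l)$, so the proposition is reduced to verifying that the Jacobian just described coincides with the matrix $\mathcal{M}(x)$ in the statement.

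Finally I would compute the four entries. For the first row, implicit differentiation of $e+2fp+gp^2=0$ at $p(x,0,0)=0$ gives $p_y=-e_y/(2f)$ and $p_z=-e_z/(2f)$, which match the first row of $\mathcal{M}$. For the second row I would differentiate $A+Bp$ in $y$ and $z$ along $\gamma$; because $a(x,0,0)=\langle \xi(x,0,0),\gamma'\rangle=0$ we have $A(x,0,0)=0$, and because $p(x,0,0)=0$ every term carrying an explicit factor $p$ disappears, so that the surviving contribution is $((A)_y(x,0,0),(A)_z(x,0,0))$. I expect the delicate point -- the \emph{main obstacle} -- to be exactly this second row: one must control the contribution arising from the variation of the resolved direction $p$, namely the cross-terms $B\,p_y$ and $B\,p_z$, and show, using the vanishing of $a$ and $e$ on $\gamma$ together with the normalization $f=H$, that along the closed orbit they do not alter the stated entries $(A)_y,(A)_z$. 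Once the Jacobian is identified with $\mathcal{M}(x)$, the linear Cauchy problem \eqref{dp} together with $d\mathcal{P}_{(0,0)}=\mathcal{Q}(l)$ completes the argument.
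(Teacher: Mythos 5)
Your overall strategy coincides with the paper's: the paper differentiates the Cauchy problem \eqref{lacp} with respect to $(y_{0},z_{0})$ and evaluates along $(y,z)=(0,0)$, which is exactly your variational-equation argument with the quadratic resolved into the branch $\frac{dy}{dx}=p(x,y,z)$, $p(x,0,0)=0$, $p_{y}=-e_{y}/(2f)$, $p_{z}=-e_{z}/(2f)$. The problem is that your proof stops at precisely the step you yourself call the \emph{main obstacle}, and that step is a genuine gap which, moreover, cannot be closed as you hope. The Jacobian of the right-hand side $(p,\,A+Bp)$ along $\gamma$ has second row
\begin{equation}
\bigl(A_{y}+Bp_{y},\ A_{z}+Bp_{z}\bigr)=\Bigl(A_{y}-\tfrac{Be_{y}}{2f},\ A_{z}-\tfrac{Be_{z}}{2f}\Bigr)\quad\text{at }(y,z)=(0,0),
\end{equation}
and there is no reason for the extra terms to vanish: by \eqref{arnfxi} one has $b(x,0,0)=l_{0}(x)\left((\gamma_{1}')^{2}+(\gamma_{2}')^{2}\right)$ with $l_{0}$ forced by \eqref{sol2}, so $B(x,0,0)=-b(x,0,0)/c(x,0,0)$ is generically nonzero; for instance, for the curve of Theorem \ref{t1} one computes $b(x,0,0)=-\gamma_{3}''(x)=-3\sin(x)\left(3\cos^{2}(x)-1\right)\not\equiv0$. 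Likewise $e_{y},e_{z}$ need not vanish on $\gamma$ (if they did, the first row of $\mathcal{M}$ would be identically zero). The facts you invoke, $a=0$ and $e=0$ on $\gamma$, give only $A(x,0,0)=0$ and $p(x,0,0)=0$; they control neither $B$ nor $e_{y},e_{z}$. (A minor separate slip: $e(x,0,0)=0$ and $f(x,0,0)\neq0$ follow from $\gamma$ being an asymptotic line without parabolic points, via Lemma \ref{p1}, not from Proposition \ref{lac}, which concerns a specially constructed field.)

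You should also know that the paper's own proof glosses over exactly the term you flagged: in passing from \eqref{eq1}--\eqref{eq2} to \eqref{eq5}, the summand $B\frac{d}{dx}\left(\frac{dy}{dy_{0}}\right)$ (and its $z_{0}$-analogue) is silently discarded, although by the second equation in \eqref{eq5} it equals $-\frac{B}{2f}\left(e_{y}\frac{dy}{dy_{0}}+e_{z}\frac{dz}{dy_{0}}\right)$, which is generically nonzero. Thus the proposition as stated holds only when $b$ vanishes along $\gamma$; in general the second row of $\mathcal{M}$ must be corrected to $\left(A_{y}-\frac{Be_{y}}{2f},\ A_{z}-\frac{Be_{z}}{2f}\right)$, i.e.\ $B$ times the first row must be added to the second. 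The correction happens to be harmless for the application in Theorem \ref{t1}: there $e_{z}(x,0,0)\equiv0$ is arranged, so both matrices are lower triangular with the same diagonal, and the eigenvalues $\exp\left(\int_{0}^{2\pi}-\frac{e_{y}}{2f}\,dx\right)$ and $\exp\left(\int_{0}^{2\pi}A_{z}\,dx\right)$ are unchanged. In summary: right strategy, crux correctly located, but the crux is left unproved --- and as stated it is false rather than merely delicate; a complete argument must either correct $\mathcal{M}$ or add the hypothesis $b|_{\gamma}\equiv0$.
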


\begin{proof}
	To fix the notation suppose that $\gamma(0)=\gamma(l)$, $(\gamma_{1}'(x),\gamma_{2}'(x))\neq(0,0)$ and $\gamma_{1}'(x)\gamma_{2}''(x)-\gamma_{2}'(x)\gamma_{1}''(x)\neq0$ for all $x$.
	
Let $(y(x,y_{0},z_{0}),z(x,y_{0},z_{0}))$ be   solution of the Cauchy problem given by equation \eqref{lacp}. Then, at $(y,z)=(0,0)$,
$\frac{dy}{dx}(x,0,0)=\frac{dz}{dx}(x,0,0)=0$.

Differentiating the first equation of \eqref{lacp} with respect to $y_{0}$ (resp. $z_{0}$), it results that:
\begin{equation}\label{eq1}
\begin{split}
\frac{d}{dx}\left(\frac{dz}{dy_{0}}\right)=A_{y}\frac{dy}{dy_{0}}+A_{z}\frac{dz}{dy_{0}}+B\frac{d}{dx}\left(\frac{dy}{dy_{0}}\right)
+\left(B_{y}\frac{dy}{dy_{0}}+B_{z}\frac{dz}{dy_{0}}\right)\frac{dy}{dx},
\end{split}
\end{equation}
\noindent respectively,

\begin{equation}\label{eq2}
\begin{split}
\frac{d}{dx}\left(\frac{dz}{dz_{0}}\right)=A_{y}\frac{dy}{dz_{0}}+A_{z}\frac{dz}{dz_{0}}+B\frac{d}{dx}\left(\frac{dy}{dz_{0}}\right)
+\left(B_{y}\frac{dy}{dz_{0}}+B_{z}\frac{dz}{dz_{0}}\right)\frac{dy}{dx}.
\end{split}
\end{equation}
Differentiating the second equation of \eqref{lacp} with respect to $y_{0}$ (resp. $z_{0}$), it results that:
\begin{equation}\label{eq3}
\begin{split}
&e_{y}\frac{dy}{dy_{0}}+e_{z}\frac{dz}{dy_{0}}+2f\frac{d}{dx}\left(\frac{dy}{dy_{0}}\right)+2\left(f_{y}\frac{dy}{dy_{0}}+f_{z}\frac{dz}{dy_{0}}
+g\frac{d}{dx}\left(\frac{dy}{dy_{0}}\right)\right)\frac{dy}{dx}\\
&+\left(g_{y}\frac{dy}{dy_{0}}+g_{z}\frac{dz}{dy_{0}}\right)\left(\frac{dy}{dx}\right)^2=0,
\end{split}
\end{equation}
\noindent respectively,
\begin{equation}\label{eq4}
\begin{split}
&e_{y}\frac{dy}{dz_{0}}+e_{z}\frac{dz}{dz_{0}}+2f\frac{d}{dx}\left(\frac{dy}{dz_{0}}\right)+2\left(f_{y}\frac{dy}{dz_{0}}+f_{z}\frac{dz}{dz_{0}}
+g\frac{d}{dx}\left(\frac{dy}{dz_{0}}\right)\right)\frac{dy}{dx}\\
&+\left(g_{y}\frac{dy}{dz_{0}}+g_{z}\frac{dz}{dz_{0}}\right)\left(\frac{dy}{dx}\right)^2=0.
\end{split}
\end{equation}
Evaluating \eqref{eq1}, \eqref{eq2}, \eqref{eq3}, \eqref{eq4} at $(y,z)=(0,0)$, it follows that:
\begin{equation}\label{eq5}
\begin{split}
&A_{y}\frac{dy}{dy_{0}}+A_{z}\frac{dz}{dy_{0}}=\frac{d}{dx}\left(\frac{dz}{dy_{0}}\right), \ \ e_{y}\frac{dy}{dy_{0}}+e_{z}\frac{dz}{dy_{0}}+2f\frac{d}{dx}\left(\frac{dy}{dy_{0}}\right)=0,\\
&A_{y}\frac{dy}{dz_{0}}+A_{z}\frac{dz}{dz_{0}}=\frac{d}{dx}\left(\frac{dz}{dz_{0}}\right), \ \ e_{y}\frac{dy}{dz_{0}}+e_{z}\frac{dz}{dz_{0}}+2f\frac{d}{dx}\left(\frac{dy}{dz_{0}}\right)=0.
\end{split}
\end{equation}
Then $\frac{d}{dx}\left(\mathcal{Q}(x)\right)=\mathcal{M}(x)\mathcal{Q}(x)$. Since
$(y(0,y_{0},z_{0}),z(0,y_{0},z_{0}))=(y_{0},z_{0})$, it follows that $\mathcal{Q}(0)=\mathcal{I}$.

Since $\mathcal{P}(y_{0},z_{0})=(y(l,y_{0},z_{0}),z(l,y_{0},z_{0}))$,   the first derivative $d\mathcal{P}_{(0,0)}$ is given by $\mathcal{Q}(l)$.
\end{proof}
\subsection{Example of a hyperbolic closed finite type $\xi$-asymptotic line}

 An explicit example of a hyperbolic closed  $\xi$-asymptotic line is given in the next result.

\begin{thm}\label{t1}
Let $\gamma:[0,2\pi]\rightarrow \mathbb{R}^3$, $\gamma(x)=(sin(x),cos(x),sin^3(x))$, see Fig. \ref{hiper}. Then it  is a hyperbolic finite type $\xi$-asymptotic line of a suitable plane field.
\end{thm}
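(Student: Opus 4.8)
The plan is to instantiate the general framework built in Proposition~\ref{lac} and Proposition~\ref{th} to the explicit curve $\gamma(x)=(\sin x,\cos x,\sin^3 x)$, verify that it satisfies the standing hypotheses, and then show that the associated Poincar\'e map has an eigenvalue off the unit circle. First I would check the regularity conditions: here $\gamma_1'=\cos x$, $\gamma_2'=-\sin x$, so $(\gamma_1',\gamma_2')\neq(0,0)$ everywhere, and the projection to the $(x,y)$-plane is the unit circle, whose signed curvature $\gamma_1'\gamma_2''-\gamma_2'\gamma_1''=-\sin^2x-\cos^2x=-1\neq 0$ for all $x$. Thus the projection is a strictly locally convex closed curve, the tubular neighborhood \eqref{tubo} is well defined, and the hypotheses of both propositions hold.

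Next I would produce the plane field. By Proposition~\ref{lac}, choosing $k_0,l_0$ via \eqref{sol2} and selecting a nonvanishing function $H(x)$ to define $k_1$ through \eqref{lack1} forces $\gamma$ to be an $\xi$-asymptotic line with no parabolic points, with Gaussian curvature $\mathcal{K}(x,0,0)=-(H(x))^2<0$; this is precisely what \emph{hyperbolic point} means along $\gamma$, so the curve lies in the hyperbolic region of the field. The remaining free data in \eqref{arnfxi} (the higher coefficients and the functions $A,B,C$ together with $H$) must be chosen $2\pi$-periodic in $x$ so that $\xi$, and hence the differential equation \eqref{eqlaV}, is well defined on the closed curve; this guarantees the first return map $\mathcal{P}$ of Proposition~\ref{th} exists.

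Finally I would establish hyperbolicity of the closed orbit by computing $d\mathcal{P}_{(0,0)}=\mathcal{Q}(2\pi)$ from the linear Cauchy problem \eqref{dp}. The strategy is to assemble the matrix $\mathcal{M}(x)$ from the jet data: the entries are $-e_y/2f$, $-e_z/2f$ evaluated on $\gamma$ (controlled by $H$ and the chosen coefficients) together with $A_y,A_z$ coming from $-a/c$. By selecting $H$ and the free coefficients so that $\mathcal{M}(x)$ is a simple, explicitly integrable $2\pi$-periodic matrix, I can integrate \eqref{dp} in closed form and read off the eigenvalues of $\mathcal{Q}(2\pi)$, then exhibit a choice for which at least one eigenvalue satisfies $|\lambda|\neq 1$, i.e. $d\mathcal{P}_{(0,0)}$ has no eigenvalue on $\mathbb{S}^1$. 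Since $\gamma$ is finite type (its symbol at a point where the planar part dominates being of the expected form) and closed, this completes the construction.

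The main obstacle I expect is the bookkeeping in the third step: the entries of $\mathcal{M}(x)$ depend on first- and second-order jets of $\xi$ in the transverse directions, and these in turn depend on the auxiliary coefficients in \eqref{arnfxi} that were left unconstrained by the asymptotic-line and curvature conditions. The delicate point is to choose those coefficients (and $H$) \emph{compatibly periodic} and \emph{simultaneously} so that the resulting monodromy matrix $\mathcal{Q}(2\pi)$ is computable and demonstrably has an eigenvalue off the unit circle, rather than being forced onto $\mathbb{S}^1$ by some hidden constraint (for instance a determinant condition reflecting preservation of some transverse area). Verifying that no such obstruction pins both eigenvalues to the unit circle, and confirming the finite-type symbol of $\gamma$, is where the real work lies; the rest is substitution into the formulas already derived.
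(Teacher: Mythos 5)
Your outline reproduces the paper's strategy exactly---check the regularity of the planar projection, invoke Proposition~\ref{lac} to make $\gamma$ an $\xi$-asymptotic line without parabolic points, then reduce hyperbolicity to the monodromy of the linear Cauchy problem \eqref{dp} from Proposition~\ref{th}---but it stops precisely where the content of the theorem begins, and the step you defer is not routine. For a non-autonomous linear system $\mathcal{Q}'=\mathcal{M}(x)\mathcal{Q}$ there is in general no closed-form expression for $\mathcal{Q}(2\pi)$ in terms of $\int_0^{2\pi}\mathcal{M}(x)\,dx$, because the matrices $\mathcal{M}(x)$ at different $x$ need not commute; so ``choose the free coefficients so that $\mathcal{M}(x)$ is explicitly integrable'' is the whole problem, not a bookkeeping afterthought. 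The paper's key device, absent from your plan, is to exploit the still-free coefficient $l_{2}(x)$ in \eqref{arnfxi}: one computes $e_{z}(x,0,0)=\mathcal{E}(x)+l_{2}(x)$ and solves $e_{z}(x,0,0)=0$ for $l_{2}$, which kills the entry $-e_{z}/(2f)$ of $\mathcal{M}(x)$ and makes $\mathcal{M}$ triangular. Only then do the eigenvalues of $d\mathcal{P}_{(0,0)}=\mathcal{Q}(2\pi)$ collapse to the two exponentials $\exp\bigl(\int_{0}^{2\pi}-\tfrac{e_{y}(x,0,0)}{2f(x,0,0)}\,dx\bigr)$ and $\exp\bigl(\int_{0}^{2\pi}A_{z}(x,0,0)\,dx\bigr)$. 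The remaining freedom is then spent making these integrals computable and nonzero: with $H\equiv 1$ in \eqref{lack1}, $l_{1}=\cos x$ gives $\int_{0}^{2\pi}A_{z}\,dx=-\tfrac{25\pi}{8}$, and taking $k_{3}=0$ and $k_{2}$ solving $e_{y}(x,0,0)+2f(x,0,0)=0$ gives $\int_{0}^{2\pi}-\tfrac{e_{y}}{2f}\,dx=2\pi$. Both eigenvalues, $e^{2\pi}$ and $e^{-25\pi/8}$, are off the unit circle; in particular the ``hidden determinant obstruction'' you worry about is ruled out (the product of the eigenvalues is $e^{-9\pi/8}\neq1$), but it is ruled out only by actually performing this construction, which your proposal leaves open.

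A smaller but genuine error: you write ``at least one eigenvalue satisfies $|\lambda|\neq1$, i.e.\ $d\mathcal{P}_{(0,0)}$ has no eigenvalue on $\mathbb{S}^1$.'' These are not equivalent, and the paper's definition of a hyperbolic closed $\xi$-asymptotic line requires that \emph{no} eigenvalue lie on $\mathbb{S}^1$; an argument producing only one eigenvalue off the circle would not prove the theorem. The explicit choices above are what guarantee both eigenvalues leave the circle simultaneously. (By contrast, the finite-type check you flag as ``real work'' is immediate: $\gamma$ is analytic and non-planar, so its derivatives at each point span $\mathbb{R}^3$.)
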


\begin{proof}
Let $\xi$ be a plane field orthogonal to the vector field $\xi$ given by \eqref{arnfxi}, where $k_{0}(x)$ and $l_{0}(x)$ are given by \eqref{sol2}.
Let $k_{1}(x)$ given by \eqref{lack1}, with $H(x)\equiv1$. Then
\begin{equation}
k_{1}(x)=\frac{3(3cos^2(x)-1)sin(x)l_{1}(x)+24cos^3(x)-18cos(x)-2}{9cos^6(x)-18cos^4(x)+9cos^2(x)+1}.
\end{equation}
By Proposition \ref{lac}, $\gamma$ is a $\xi$-asymptotic line without parabolic points and $\mathcal{K}(x,0,0)=-1$.
Performing the calculations, $e_{z}(x,0,0)=\mathcal{E}(x)+l_{2}(x)$. Solve $e_{z}(x,0,0)=0$ for $l_{2}(x)$.
This vanishes the entry $\left(-\frac{e_{z}(x,0,0)}{2f(x,0,0)}\right)$ of  $\mathcal{M}(x)$ given by Theorem \ref{th}. From \eqref{dp}, it follows that
the eigenvalues of $d\mathcal{P}_{(0,0)}$ are given by
\begin{equation}
{\rm exp}\left(\int_{0}^{2\pi}-\frac{e_{y}(x,0,0)}{2f(x,0,0)}dx\right) \;\;\; {\rm and}\;\;\; {\rm exp}\left(\int_{0}^{2\pi}A_{z}(x,0,0)dx\right).
\end{equation}
 
Set $l_{1}(x)=cos(x)$. Then
{\small  
\begin{equation}
\begin{split}
A_{z}(x,0,0)&=9sin(x)cos^8(x)+54sin(x)cos^6(x)-9cos^6(x)-117sin(x)cos^4(x)\\
&+18cos^4(x)+55cos^2(x)sin(x)-9cos^2(x)-1.
\end{split}
\end{equation}
}
It follows that $\int_{0}^{2\pi}A_{z}(x,0,0)dx=-\frac{25\pi}{8}$. Let  $k_{3}(x)=0$ and  $k_{2}(x)$ a  solution of  the equation
$e_{y}(x,0,0)+2f(x,0,0)=0$. It follows that $$\int_{0}^{2\pi}\left(-\frac{e_{y}(x,0,0)}{2f(x,0,0)}\right)dx=2\pi.$$
\end{proof}

\begin{figure}[H]
\captionsetup[subfigure]{width=.45\linewidth}
     \centering
     \subfloat[][Curve $\gamma(x)=(sin(x),cos(x),sin^3(x))$.]{\includegraphics[width=.4\textwidth]{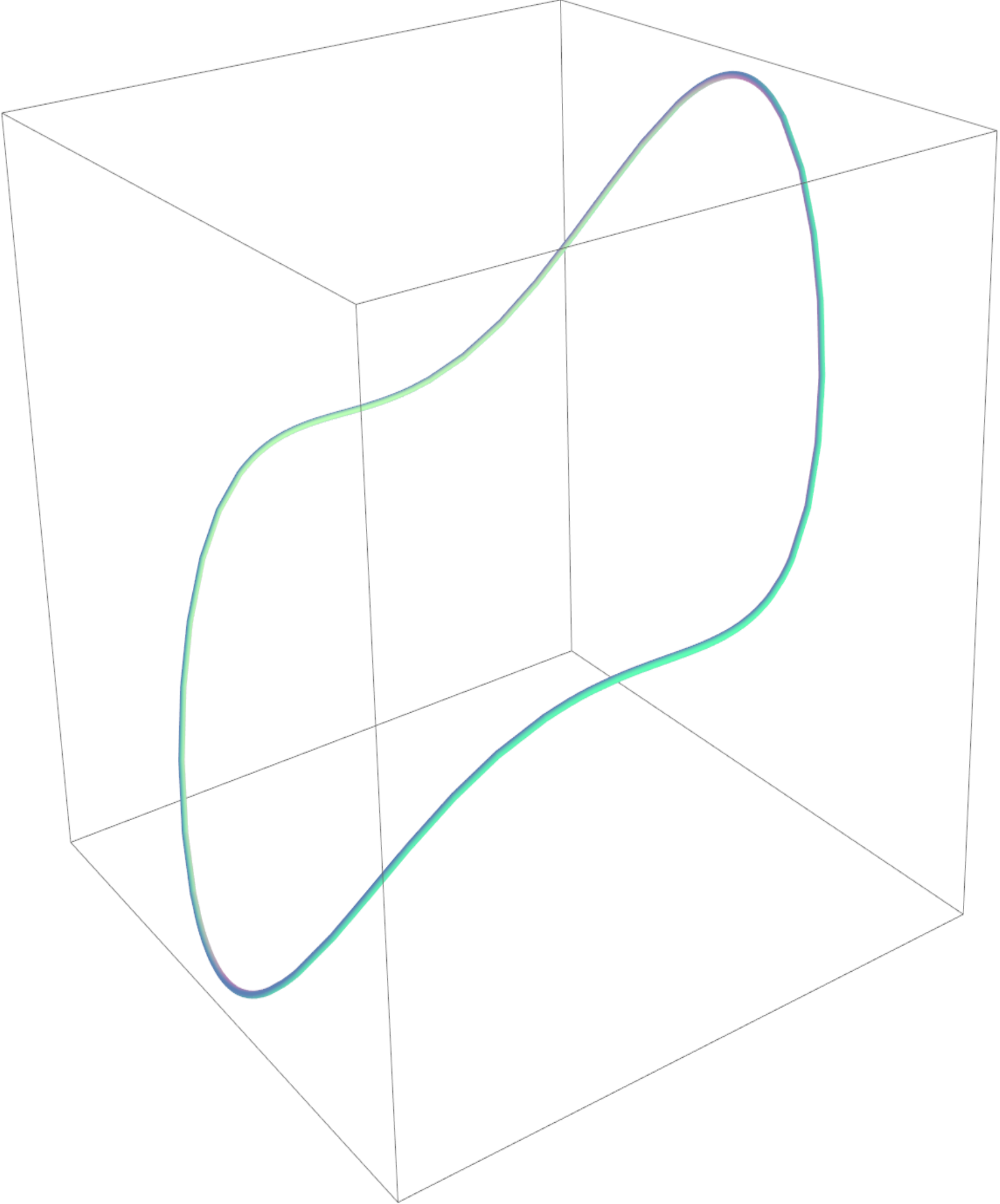}}
     \qquad \qquad \qquad
     \subfloat[][Curve $\gamma(x)=(sin(x),cos(x),sin^3(x))$ on the cylinder $\beta(x,y)=(sin(x),cos(x),y)$.]{\includegraphics[width=.4\linewidth]{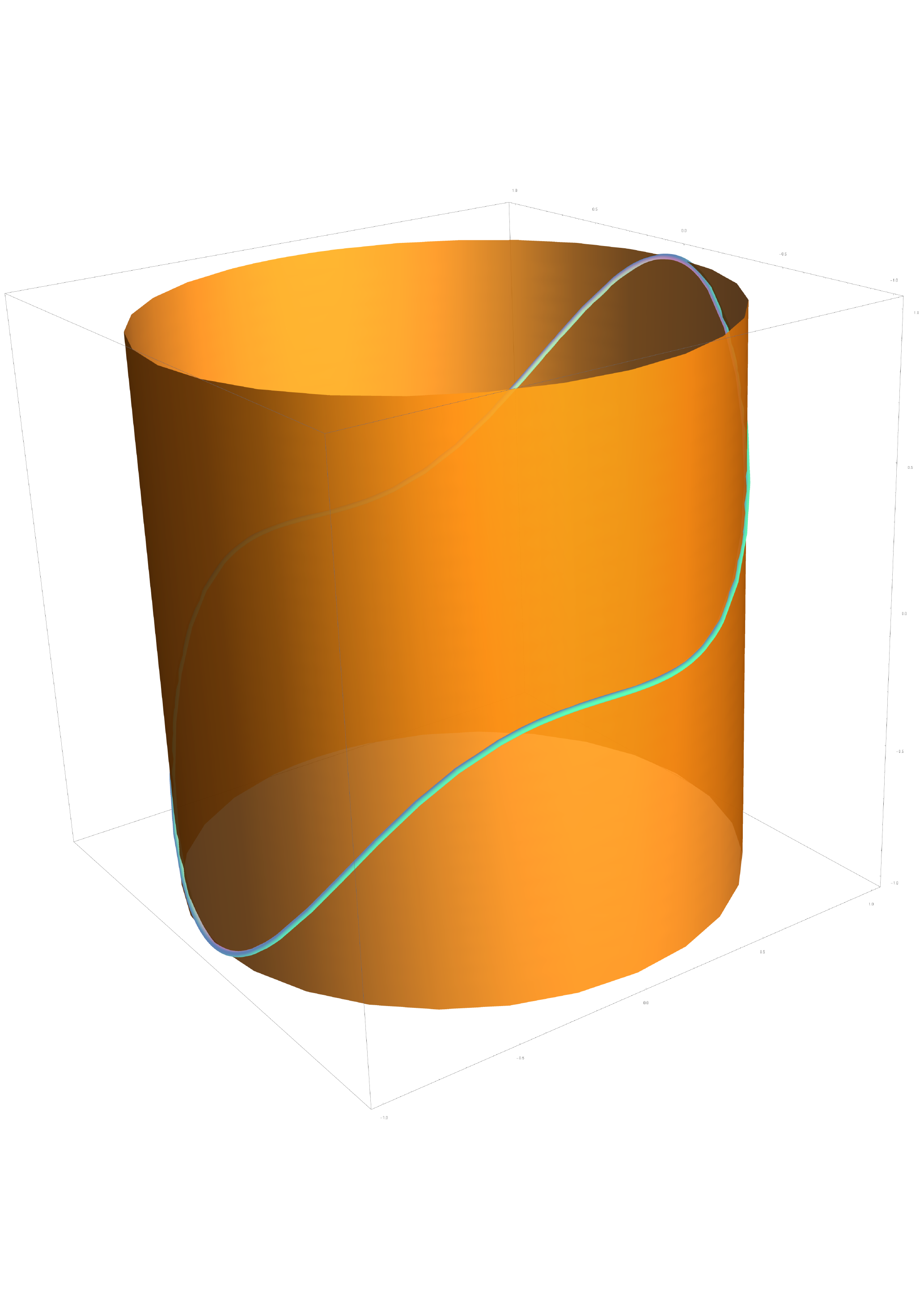}}
     \caption{Finite type curve $\gamma(x)=(sin(x),cos(x),sin^3(x))$.}
     \label{hiper}
\end{figure}
 \section*{Acknowledgments}

The second author is fellow of CNPq. This work was partially supported by Pronex FAPEG/CNPq.


\vskip 1cm

\vskip .2cm {\small \halign {# \hfil & \quad # \hfil \cr
		Authors:& Douglas H. da Cruz and  Ronaldo A. Garcia \cr \cr
		
			Address: &Instituto de Matem\'atica e Estat\'\i stica\cr
		&Universidade Federal de  Goi\'as \cr
		 &Campus Samambaia \cr
		&74690-900 -  Goi\^ania  -G0 - Brasil \cr
		&email:\ ragarcia@ufg.br\cr\cr}}

\appendix
\label{pat}
\section{A new proof of the Arnold's Theorem}

Here it will be given a geometric proof of Arnold's Theorem.

 \begin{proof}
 
Let $\gamma$ be an asymptotic line of finite type $(u,u^m,u^n)$, $n\geq m$. Set $N(u)=(\gamma_{2}'(u),-\gamma_{1}'(u),0)$. Let
\begin{equation}
\alpha(u,v)=\gamma(u)+vN(u)+(k_{1}(u)v+k_{2}(u)v^2+k_{3}(u)v^3+\mathcal{O}^4(v))(\gamma'\wedge N)(u).
\end{equation}
Let
\begin{equation}
N_{\alpha}=\frac{\alpha_{u}\wedge\alpha_{v}}{|\alpha_{u}\wedge\alpha_{v}|}.
\end{equation}
The implicit differential equations of the asymptotic lines of $\alpha$ is given by
\begin{equation}
edu^2+2fdudv+gdv^2=0,
\end{equation}
\noindent where $e=\langle \alpha_{uu},N_{\alpha}\rangle$,
$f=\langle \alpha_{uv},N_{\alpha}\rangle$ and $g=\langle \alpha_{vv},N_{\alpha}\rangle$.

Since $\gamma$ is an asymptotic line of $\alpha$, and parametrized by $v=0$,  we have that $e(u,0) =0$. Then by equation \eqref{lack1} it follows that
\begin{equation}\label{k1}
k_{1}(u)=\frac{[(n-m)m^2u^{2(m-1)}+n-1]nu^{n-m}}{[1+m^2u^{2(m-1)}+n^2u^{2(n-1)}](m-1)m}.
\end{equation}
Direct calculations shows that
\begin{equation}
f(u,0)=\frac{(n-m)(n-1)n(1+m^2u^{2(m-1)})^2u^{n-m-1}}{(m-1)m}.
\end{equation}
It follows that $f(0,0)\neq0$ if, and only if, $n=m+1$.

If $\gamma$ is a rotating space curve of finite type $(u,u^m,u^{m+1})$, $ m\geq 2$, set $N(u)=(\gamma_{2}'(u),-\gamma_{1}'(u),0)$ and let
\begin{equation}\label{eq:la}
\beta(u,v)=\gamma(u)+vN(u)+k_{1}(u)v(\gamma'\wedge N)(u),
\end{equation}
\noindent where $k_{1}(u)$ is given by \eqref{k1} with  $n=m+1$. Therefore,  $e(u,0)=[\beta_u,\beta_v,\beta_{uu}](0,0)= 0$ and $f(0,0)=[\beta_u,\beta_v,\beta_{uv}](0,0)=\frac{m+1}{m-1}\neq 0$. Then $\gamma$ is an asymptotic line,
without parabolic points, of the surface parametrized by $\beta$  in a neighborhood of $(u,v)=(0,0)$.
\end{proof}

\end{document}